\numberwithin{equation}{section}
\newtheorem{lem}{Lemma}[section]
\newtheorem{thm}{Theorem}[section]
\newtheorem{proposition}[thm]{Proposition}
\newtheorem{cor}[thm]{Corollary}
\theoremstyle{remark}
\newtheorem{definition}[thm]{Definition}
\newtheorem{assum}{Assumption}
\newcommand{\nn}{\nonumber}
\newcommand{\R}{{\mathbb R}}
\newcommand{\dx}{ \, {\rm d} x}
\newcommand{\dy}{ \, {\rm d} y}
\renewcommand{\tilde}{\widetilde}
\renewcommand{\hat}{\widehat}
\newcommand{\mc}[1]{\mathcal{#1}}
\newcommand{\CR}{\mathcal{C}([0,T];\mathbb{R}^d)}
\newcommand{\EE}{\mathbb{E}}
\newcommand{\TE}{\mathcal{E}}
\newcommand{\RR}{\mathbb{R}}
\newcommand{\NN}{\mathbb{N}}
\newcommand{\PP}{\mathbb{P}}
\newcommand{\OV}{\overline{V}}
\newcommand{\OX}{\overline{X}}
\newcommand{\ud}{\,\mathrm{d}}
\newcommand{\la}{\langle}
\newcommand{\ra}{\rangle}
\newcommand{\norm}[1]{\left\lVert#1 \, \right\rVert}
\DeclareMathOperator\diag{diag}
\author{Hui Huang}
\address{Department of Mathematics and Statistics, University of Calgary, 2500 University Dr NW, Calgary, AB T2N 1N4, Canada}
\email{hui.huang1@ucalgary.ca}
\author{Jinniao Qiu}
\address{Department of Mathematics and Statistics, University of Calgary, 2500 University Dr NW, Calgary, AB T2N 1N4, Canada}
\email{jinniao.qiu@ucalgary.ca}
\thanks{H. H. is partially supported by the Pacific Institute for the Mathematical Sciences (PIMS) postdoc fellowship. J. Q. is partially supported  by the National Science and Engineering Research Council of Canada (NSERC) and by the start-up funds from the University of Calgary.}
\begin{document}
\title[Mean-field limit for CBO]{On the mean-field limit for the consensus-based optimization}
\maketitle
\begin{abstract}
	This paper is concerned with the large particle limit for the consensus-based optimization (CBO), which was postulated in the pioneering works \cite{carrillo2018analytical,PTTM}. In order to solve this open problem, we adapt a compactness argument by first proving the tightness of the empirical measures $\{\mu^N\}_{N\geq 2}$ associated to the particle system and then verifying that the limit measure $\mu$ is the unique weak solution to the mean-field CBO equation. Such results are extended to the model of particle swarm optimization (PSO).
\end{abstract}
{\small {\bf Keywords:} Consensus-based optimization, particle swarm optimization, propagation of chaos,  tightness, weak convergence.}

\section{Introduction} 
The global optimization of a potentially nonconvex nonsmooth cost function is of great interests in various areas such as economics, physics,  and artificial intelligence. In the sequel, we consider the following optimization problem
\begin{equation}\label{optimization}
x^*\in\mbox{argmin}_{x\in \RR^d}\TE(x)\,,
\end{equation}
where $\TE(x):~\RR^d\to \RR$ is a given continuous cost function, which one wishes to minimize. Many methods have been designed to tackle this kind of problems. The present paper is  in particular concerned with the methods of so-called \textit{metaheuristics} 
\cite{aarts1989simulated,Back:1997:HEC:548530,Blum:2003:MCO:937503.937505,Gendreau:2010:HM:1941310} which provide empirically robust solutions to tackle hard optimization problems with fast algorithms. Metaheuristics are  methods that orchestrate an interaction between local improvement procedures and global/high level strategies and combine random and deterministic decisions,
to create a process capable of escaping from local optima and performing a robust search of a solution space. Noble examples of metaheuristics include Simplex Heuristics \cite{nelder1965simplex}, Evolutionary Programming \cite{fogel2006evolutionary}, Genetic Algorithms \cite{holland1992adaptation}, Particle Swarm Optimization \cite{kennedy1995particle}, Ant Colony Optimization \cite{dorigo2005ant}, and Simulated Annealing \cite{aarts1989simulated}. Recently a new type of metaheuristics was proposed in \cite{carrillo2018analytical,PTTM}, which is referred to as consensus-based optimization (CBO) method.

The consensus-based optimization takes advantage of an interacting $N$-particle system $\{(X_t^{i,N})_{t\geq 0}\}_{i=1}^N$, which is described by a system of stochastic differential equations (SDEs)
\begin{align}\label{particle}
dX_t^{i,N}=-\lambda(X_t^{i,N}-X_\alpha(\mu_t^N))dt+\sigma D(X_t^{i,N}-X_\alpha(\mu_t^N))dB_t^i\,,
\end{align}
where $\lambda,\sigma>0$, 
\begin{equation}
X_\alpha(\mu_t^N)=\frac{\int_{\R^d}xe^{-\alpha\TE(x)}\mu_t^N(dx)}{\int_{\R^d}e^{-\alpha\TE(x)}\mu_t^N(dx)}\quad \mbox{ with }\mu_t^N=\frac{1}{N}\sum_{i=1}^{N}\delta_{X_t^{i,N}}\,,
\end{equation}
 and $\{(B_t^i)_{t\geq0}\}_{i=1}^N$ are $N$ independent $d$-dimensional Wiener processes. We also use the following notation for the diagonal matrix
$$D(X_t):=\mbox{diag}\{(X_t)_1,\dots,(X_t)_d\}\in\RR^{d\times d}\,,$$ 
where $(X_t)_k$ is the $k$-th component of $X_t$.
 The choice of the weight function $$\omega_\alpha^\TE(x):=\exp(-\alpha\TE(x)),$$ comes from  the  well-known Laplace's principle \cite{miller2006applied,Dembo2010}, a classical asymptotic method for integrals, which states that for any probability measure $\mu\in\mc{P}( \RR^d )$, there holds
\begin{equation}\label{lap_princ}
\lim\limits_{\alpha\to\infty}\left(-\frac{1}{\alpha}\log\left(\int_{ \RR^d }\omega_\alpha^\TE(x)\mu(dx)\right)\right)=\inf\limits_{x \in \rm{supp }(\mu)} \TE(x)\,.
\end{equation}
Thus for $\alpha$ large enough, one expects that $${X}_{\alpha}(\mu_t^{N})\approx\mbox{argmin }\{\TE(X_t^{1}),\dots,\TE(X_t^{N})\}\,,$$
which means that ${X}_{\alpha}(\mu_t^{N})$ is a global best location at time $t$.   It has been proved that CBO can guarantee global convergence under suitable assumptions \cite{fornasier2021consensus} and it is a powerful and robust method to solve many interesting non-convex high-dimensional optimization problems in machine learning \cite{CJ,FHPS2}. By now, CBO methods have also been generalized  to optimization over manifolds \cite{FHPS1,FHPS2,kim2020stochastic,fornasier2021anisotropic} and several variants have been explored, which use additionally, for instance, personal best information \cite{totzeck2020consensus}, binary interaction dynamics \cite{benfenati2021binary} or connect CBO with  Particle Swarm Optimization \cite{grassi2020particle,cipriani2021zero}. The readers are referred to \cite{totzeck2021trends} for a comprehensive review on the recent developments of the CBO methods.

Because of the nonlinear and nonlocal term $X_\alpha(\mu^N)$, the conventional method (see e.g. \cite{sznitman1991topics,fetecau2019propagation}) for the mean-field limit does not work here and the pioneering CBO works  \cite{carrillo2018analytical,PTTM} \textit{postulated} the large particle limit (as $N\to \infty$) of the system \eqref{particle} towards the Mckean process
\begin{equation}\label{Mckean}
d\OX_t=-\lambda(\OX_t-X_\alpha(\mu_t))dt+\sigma D(\OX_t-X_\alpha(\mu_t))dB_t,
\end{equation}
where
\begin{equation}
X_\alpha(\mu_t)=\frac{\int_{\R^d}xe^{-\alpha\TE(x)}\mu_t(dx)}{\int_{\R^d}e^{-\alpha\TE(x)}\mu_t(dx)}\quad \mbox{ with }\mu_t=\operatorname{Law} (\OX_t)\,.
\end{equation}
Throughout this paper, we denote by $\mc{L}(X)$ the the law of random variable $X$. Applying the It\^{o}-Doeblin formula, one can see that $\mu$ is a weak solution to the following mean-field partial differential equation (PDE):
\begin{equation}\label{PDE}
\partial_t\mu_t=\frac{\sigma^2}{2}\sum_{k=1}^d\frac{\partial^2}{\partial {x_k}^2}( (x-X_\alpha(\mu_t))_k^2\mu_t)+\lambda\nabla
\cdot((x-X_\alpha(\mu_t))\mu_t),
\end{equation}
in the sense of Definition \ref{defweak}. We refer to  \cite[Theorem 2.1]{carrillo2018analytical} for the well-posedness of the particle system \eqref{particle} and \cite[Theorem 3.2]{carrillo2018analytical} for  the nonlinear SDE \eqref{Mckean}.  While the existence of the weak solution $\mu$ to PDE \eqref{PDE} follows straightforwardly from an application of the It\^{o}-Doeblin formula, the uniqueness may be obtained without much effort on the basis of the well-possedness of Mckean process \eqref{Mckean}; see Lemma \ref{lemuni} and the appendix for a sketched proof.

This paper is devoted to solving the open problem suggested in \cite{carrillo2018analytical,PTTM,totzeck2021trends}  by providing a rigorous proof of the mean-field limit for the CBO method \eqref{particle} through a tightness argument. We first prove that the sequence of empirical measures $\{\mu^N\}_{N\geq 2}$ ($\mu^N=\frac{1}{N}\sum_{i=1}^{N}\delta_{X^{i,N}}$ are $\mc{P}(\CR)$-valued random variables) is tight.  Prokhorov's theorem indicates that there exists a subsequence of $\{\mu^N\}_{N\geq 2}$ converging in law to a random measure $\mu$. Then, to identify the limit, we verify that the limit measure $\mu$ is a weak solution to the mean-field PDE \eqref{PDE} underlying the process \eqref{Mckean} almost surely, while the uniqueness of the weak solution to PDE \eqref{PDE} yields that $\mu$ is actually deterministic. The approach mixes certain probabilistic and stochastic arguments and some analysis on PDEs. For such a probabilistic method with tightness arguments, we refer to \cite{sznitman1991topics} for an introduction and the interested readers are also referred to \cite{liu2016propagation,li2019mean,fournier2014propagation} for the application to the study of the propagation of chaos for the large Brownian particle system with particular \textit{Coulomb type} interaction forces.

Throughout this paper the cost function $\TE$ satisfies the following assumption.
\begin{assum}\label{asum}
	For the given cost function $\TE:\RR^d\rightarrow \RR$, it holds that:
	\begin{itemize}
		\item[(1)]  There exists some constant $L>0$ such $|\TE(x)-\TE(y)|\leq L(|x|+|y|)|x-y|$ for all $x,y\in\RR^d$;
		\item[(2)]	$\TE$ is  bounded from below with $-\infty<\underline{\TE}:=\inf \TE$ and there exists some constant $C_u>0$ such that
		\begin{equation*}
      \TE(x)-\underline{\TE}\leq C_u(1+|x|^2)\mbox{ for all }x\in\RR^d\,;
		\end{equation*}
		\item[(3)]  $\TE$ has quadratic growth at infinity. Namely, there  exist constants $C_l,\, M>0$ such that
		\begin{equation*}
		\TE(x)-\underline{\TE}\geq C_l|x|^2\mbox{ for all }|x|\geq M\,.
		\end{equation*}
	\end{itemize}
\end{assum}

The rest of the paper is organized as follows: In Section 2 we prove the tightness of the empirical measures $\{\mu^N\}_{N\geq 2}$ associated to the CBO particle system \eqref{particle}  through the Aldous criteria; see Theorem \ref{thmtight}. Then in Section 3 we verify that the limit measure  $\mu$ of a subsequence of $\{\mu^N\}_{N\geq 2}$  is the unique weak solution to the mean-field CBO equation \eqref{PDE}; see Theorem \ref{thmmean}. In Section 4, the result is extended to the model of particle swarm optimization. Finally, the existence and uniqueness of the weak solution is proved for a class of linear PDEs in Appendix.

\section{Tightness of the empirical measures}
First, let us recall the following lemma on a uniform moment estimate for the particle system \eqref{particle} from \cite[Lemma 3.4]{carrillo2018analytical}
\begin{lem}\label{lem2mon}
	Let $\TE$ satisfy Assumption \ref{asum} and $\mu_0\in \mc{P}_4(\RR^d)$. For any $N\geq 2$, assume that $\{(X_t^{i,N})_{t\in[0,T]}\}_{i=1}^N$ is the unique solution to the particle system \eqref{particle} with $\mu_0^{\otimes N}$-distributed initial data $\{X_0^{i,N}\}_{i=1}^N$. Then there exists a constant $K>0$ independent of $N$ such that
	\begin{equation}
	\sup\limits_{i=1,\cdots,N} \left\{
		\sup\limits_{t\in[0,T]}
	\EE\left[|X_t^{i,N}|^2+|X_t^{i,N}|^4\right] +\sup\limits_{t\in[0,T]}\EE\left[|X_\alpha(\mu^N_t)|^2+|X_\alpha(\mu^N_t)|^4\right]
	\right\}
	\leq K\,. \label{est-lem2mon}
	\end{equation}
\end{lem}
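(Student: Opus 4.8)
The plan is to reduce \eqref{est-lem2mon} to a pair of scalar differential inequalities through It\^o's formula and to close them with Gr\"onwall's inequality, the entire difficulty being concentrated in a uniform-in-$N$ bound for the nonlocal consensus point $X_\alpha(\mu_t^N)$. Since the initial data are $\mu_0^{\otimes N}$-distributed and the dynamics \eqref{particle} are exchangeable, the law of $X_t^{i,N}$ does not depend on $i$, so it suffices to control the two quantities $m_2(t):=\EE|X_t^{1,N}|^2$ and $m_4(t):=\EE|X_t^{1,N}|^4$, after which the supremum over $i$ and the consensus-point terms follow.

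First I would apply It\^o's formula to $|X_t^{i,N}|^2$. Because the diffusion matrix is $\sigma D(X_t^{i,N}-X_\alpha(\mu_t^N))$, its It\^o correction contributes $\sigma^2\EE|X_t^{i,N}-X_\alpha(\mu_t^N)|^2$, and after a standard localization (stopping at $\tau_R=\inf\{t:|X_t^{i,N}|\ge R\}$, taking expectations so that the stochastic integral drops out, then letting $R\to\infty$) one obtains
\[
\frac{d}{dt}\,\EE|X_t^{i,N}|^2 = -2\lambda\,\EE\big[X_t^{i,N}\cdot(X_t^{i,N}-X_\alpha(\mu_t^N))\big] + \sigma^2\,\EE|X_t^{i,N}-X_\alpha(\mu_t^N)|^2.
\]
Expanding the inner products and using Young's inequality, every cross term is absorbed into $\EE|X_t^{i,N}|^2$ and $\EE|X_\alpha(\mu_t^N)|^2$. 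The analogous computation for $|X_t^{i,N}|^4$ produces, again after Young's inequality, a right-hand side controlled by $m_4(t)$, by the already-treated second moment, and by $\EE|X_\alpha(\mu_t^N)|^4$.

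The main obstacle is therefore to estimate the moments of $X_\alpha(\mu_t^N)$ independently of $N$, which is exactly where Assumption \ref{asum} enters. Writing $X_\alpha(\mu_t^N)$ as the $\mu_t^N$-average of $x$ against the probability weight $\omega_\alpha^\TE/\|\omega_\alpha^\TE\|_{L^1(\mu_t^N)}$, convexity of $|\cdot|^2$ (Jensen) gives
\[
|X_\alpha(\mu_t^N)|^2 \le \frac{\int_{\RR^d}|x|^2\,\omega_\alpha^\TE(x)\,\mu_t^N(dx)}{\int_{\RR^d}\omega_\alpha^\TE(x)\,\mu_t^N(dx)}.
\]
Splitting the numerator over $\{|x|\le M\}$ and $\{|x|>M\}$: on the ball one simply gains $M^2$, while on the exterior the coercivity $\TE(x)-\underline\TE\ge C_l|x|^2$ forces $|x|^2\omega_\alpha^\TE(x)\le e^{-\alpha\underline\TE}\,\sup_{t\ge 0}\!\big(t\,e^{-\alpha C_l t}\big)=\tfrac{e^{-\alpha\underline\TE}}{e\alpha C_l}$, an $N$-independent constant. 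The delicate point, and the genuine heart of the lemma, is to bound the \emph{random denominator} $\|\omega_\alpha^\TE\|_{L^1(\mu_t^N)}=\int\omega_\alpha^\TE\,d\mu_t^N$ from below uniformly in $N$, so that the consensus point cannot run off to infinity; here the upper growth bound $\TE(x)-\underline\TE\le C_u(1+|x|^2)$ of Assumption \ref{asum}(2) is what keeps the weight from degenerating. The aim of this step is a bound of the form $\EE|X_\alpha(\mu_t^N)|^2\le c_1+c_2\,m_2(t)$, and likewise $\EE|X_\alpha(\mu_t^N)|^4\le c_1'+c_2'\,m_4(t)$, with constants depending only on $\lambda,\sigma,\alpha,M,C_l,C_u,\underline\TE$.

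Inserting these into the two differential inequalities yields $\frac{d}{dt}m_2(t)\le C_1+C_2\,m_2(t)$ and, once $m_2$ is under control, $\frac{d}{dt}m_4(t)\le C_3+C_4\,m_4(t)$. Gr\"onwall's inequality then gives $m_2(t)+m_4(t)\le K$ on $[0,T]$, with $K$ depending only on $\lambda,\sigma,\alpha,T$, the constants in Assumption \ref{asum}, and the moments of $\mu_0$ (finite since $\mu_0\in\mc{P}_4(\RR^d)$), hence independent of $N$. Combining with the consensus-point estimates $\EE|X_\alpha(\mu_t^N)|^2\le c_1+c_2\,m_2(t)$ and $\EE|X_\alpha(\mu_t^N)|^4\le c_1'+c_2'\,m_4(t)$ then produces \eqref{est-lem2mon}.
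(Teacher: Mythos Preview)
Your sketch is structurally sound and is in fact more than what the paper itself provides: the paper does not prove Lemma~\ref{lem2mon} at all but simply cites it as \cite[Lemma 3.4]{carrillo2018analytical}. The strategy you outline---It\^o's formula for $|X_t^{i,N}|^2$ and $|X_t^{i,N}|^4$, Young's inequality on the cross terms, a pathwise control of $|X_\alpha(\mu_t^N)|^2$ in terms of the empirical second moment, exchangeability to reduce everything to $m_2(t)$ and $m_4(t)$, then Gr\"onwall---is exactly the standard argument, and is presumably what appears in the cited reference.

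One remark on the intermediate step. The pathwise estimate you aim for, namely
\[
|X_\alpha(\mu_t^N)|^2 \;\le\; b_1 + b_2 \int_{\RR^d}|x|^2\,\mu_t^N(dx),
\]
is stated separately in the paper as Lemma~\ref{lemXa} (again cited from \cite[Lemma 3.3]{carrillo2018analytical}), so you have correctly isolated the crux. Your treatment of the numerator (splitting at $|x|=M$ and using $\sup_{t\ge 0} t e^{-\alpha C_l t}$) is fine; the only place your write-up remains genuinely sketchy is the lower bound on the random denominator $\int \omega_\alpha^\TE\,d\mu_t^N$. You note correctly that Assumption~\ref{asum}(2) is the relevant hypothesis, but you do not explain how it produces an \emph{affine}-in-$m_2$ bound rather than, say, an exponential one (which is what a naive Jensen estimate on the denominator would give). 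That said, once Lemma~\ref{lemXa} is accepted---and the paper treats it as a black box---the rest of your argument (feeding $\EE|X_\alpha(\mu_t^N)|^4 \le 2b_1^2+2b_2^2 m_4(t)$ back into the Gr\"onwall loop, etc.) goes through exactly as you describe.
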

We treat $X^{i,N}: \Omega\mapsto \CR$. Then
$\mu^N=\sum_{i=1}^{N}\delta_{X^{i,N}}: \Omega\mapsto \mc{P}(\CR)$ is a random measure. Let us denote $\mc{L}(\mu^N):=\mbox{Law}(\mu^N)\in \mc{P}(\mc{P}(\CR))$.  We can prove that $\{\mc{L}(\mu^N)\}_{N\geq 2}$ is tight, or  we say
$\{\mu^N\}_{N\geq1}$ is tight.

\begin{thm}\label{thmtight}
	Under the same assumption as in Lemma \ref{lem2mon}, recall the empirical measure $\mu^N=\frac{1}{N}\sum_{i=1}^N\delta_{X^{i,N}}$. Then
    the sequence $\{\mc{L}(\mu^N)\}_{N\geq 2}$ is tight in $\mc{P}(\mc{P}(\CR))$.
\end{thm}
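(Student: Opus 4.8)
The plan is to establish tightness of $\{\mc{L}(\mu^N)\}_{N\geq 2}$ in $\mc{P}(\mc{P}(\CR))$ by reducing it, via a standard criterion, to tightness of the laws of the individual particle trajectories in $\mc{P}(\CR)$. The key reduction is the following: by a theorem of Sznitman (see \cite{sznitman1991topics}), the sequence of laws $\{\mc{L}(\mu^N)\}_{N\geq 2}$ of the empirical measures is tight on $\mc{P}(\mc{P}(\CR))$ if and only if the sequence of ``intensity measures'' (equivalently, the laws of a single tagged particle) $\{\mc{L}(X^{1,N})\}_{N\geq 2}$ is tight on $\mc{P}(\CR)$. Because the particles are exchangeable (the initial data are i.i.d.\ $\mu_0$-distributed and the dynamics are symmetric), it suffices to control the law of $X^{1,N}$. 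So the real work is to prove tightness of a single trajectory's law on the path space $\CR = \mathcal{C}([0,T];\RR^d)$.

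To prove tightness on $\CR$, I would invoke the \textbf{Aldous criterion} together with a moment/initial-data condition, as the excerpt's roadmap explicitly announces. Writing $X_t^{1,N} = X_0^{1,N} - \lambda\int_0^t (X_s^{1,N}-X_\alpha(\mu_s^N))\,ds + \sigma\int_0^t D(X_s^{1,N}-X_\alpha(\mu_s^N))\,dB_s^1$, I would verify two things. First, the initial laws $\mc{L}(X_0^{1,N})=\mu_0$ are trivially tight (a fixed measure), so the family is tight at $t=0$. Second, for the Aldous condition, one controls the increments across stopping times: for any sequence of $[0,T]$-valued stopping times $\tau_N$ and any $\delta>0$,
\begin{equation*}
\EE\big[\,|X_{(\tau_N+\delta)\wedge T}^{1,N}-X_{\tau_N}^{1,N}|^2\,\big] \;\longrightarrow\; 0 \quad \text{as } \delta\to 0,\ \text{uniformly in } N.
\end{equation*}
Splitting the increment into its drift and martingale parts, the drift contribution is bounded by $2\lambda^2\delta \,\EE\int_{\tau_N}^{(\tau_N+\delta)\wedge T}|X_s^{1,N}-X_\alpha(\mu_s^N)|^2\,ds$ via Cauchy--Schwarz, and the diffusion contribution by $2\sigma^2\,\EE\int_{\tau_N}^{(\tau_N+\delta)\wedge T}|D(X_s^{1,N}-X_\alpha(\mu_s^N))|^2\,ds$ via the It\^o isometry (or the Burkholder--Davis--Gundy inequality). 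Both integrands are controlled by the uniform second-moment bound $\EE[|X_s^{1,N}|^2]+\EE[|X_\alpha(\mu_s^N)|^2]\leq K$ supplied by Lemma \ref{lem2mon}, so each term is $\mathcal{O}(\delta)$ uniformly in $N$, giving the required smallness.

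The main obstacle I anticipate is \emph{not} the Aldous estimate itself, which is routine given Lemma \ref{lem2mon}, but rather making rigorous the passage from tightness of the single-particle law to tightness of $\{\mc{L}(\mu^N)\}_{N\geq2}$ in the double-layered space $\mc{P}(\mc{P}(\CR))$. One must carefully justify the exchangeability argument and the equivalence between compactness of intensity measures and tightness of the empirical-measure laws; here the boundedness of the Aldous moments must be genuinely uniform in $N$, and the nonlocal term $X_\alpha(\mu_s^N)$ must be handled so that its second moment is controlled uniformly (again furnished by Lemma \ref{lem2mon}, which crucially bounds $\EE[|X_\alpha(\mu_s^N)|^2]$ independently of $N$). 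Care is also needed because the consensus point couples all particles nonlinearly, so exchangeability rather than independence is what makes the reduction valid. Once these pieces are assembled, the conclusion follows: the single-particle laws are tight on $\CR$, hence by Sznitman's criterion $\{\mc{L}(\mu^N)\}_{N\geq2}$ is tight on $\mc{P}(\mc{P}(\CR))$.
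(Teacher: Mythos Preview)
Your approach is essentially the paper's: reduce via Sznitman's exchangeability criterion to tightness of $\{\mc{L}(X^{1,N})\}_{N\geq 2}$ on $\CR$, then verify the Aldous conditions with the moment bounds of Lemma~\ref{lem2mon}.

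One technical point deserves care. After the It\^o isometry the diffusion contribution is $\sigma^2\,\EE\int_{\beta}^{\beta+\delta}|X_s^{1,N}-X_\alpha(\mu_s^N)|^2\,ds$, and because $\beta$ is a \emph{random} stopping time you cannot simply bound this by $\delta\cdot\sup_s\EE[|X_s^{1,N}-X_\alpha(\mu_s^N)|^2]$ using only pointwise-in-$s$ second moments; the integrand and the indicator $\mathbb{1}_{[\beta,\beta+\delta]}(s)$ are correlated. The paper closes this by a second Cauchy--Schwarz in $s$, obtaining
\[
\sigma^2\,\EE\int_{\beta}^{\beta+\delta}|X_s^{1,N}-X_\alpha(\mu_s^N)|^2\,ds
\;\leq\;\sigma^2\,\delta^{1/2}\Big(\int_0^T\EE\big[|X_s^{1,N}-X_\alpha(\mu_s^N)|^4\big]\,ds\Big)^{1/2},
\]
which uses the \emph{fourth}-moment part of Lemma~\ref{lem2mon} and gives the rate $\mathcal O(\delta^{1/2})$ rather than $\mathcal O(\delta)$. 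Also note that the Aldous criterion as stated in the paper (Lemma~\ref{lemAldous}) requires tightness of the marginals at every $t\in[0,T]$, not only at $t=0$; this is immediate from the uniform second moments via Markov's inequality, but should be checked rather than only invoking the common initial law.
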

\begin{proof}
	According to \cite[Proposition 2.2 (ii)]{sznitman1991topics}, we only need to prove that $\{\mc{L}(X^{1,N})\}_{N\geq 2}$ is tight in $\mc{P}(\CR)$ because of the exchangeability of the particle system. We shall do this by verifying the Aldous criteria below.
	\begin{lem}\label{lemAldous}
		Let $\{X^n\}_{n\in \NN}$ be a sequence of random variables defined on a probability space $(\Omega,\mc{F},\PP)$ and valued in $\mc{C}([0,T];\RR^d)$. The sequence of probability distributions $\{\mu_{X^n}\}_{n\in \NN}$  of $\{X^n\}_{n\in \NN}$ is tight on $\mc{C}([0,T];\RR^d)$ if the following two conditions hold.
		
		$(Con 1)$ For all $t\in [0,T]$, the set of distributions of $X_t^n$, denoted by $\{\mu_{X_t^n}\}_{n\in\NN}$, is tight as a sequence of probability measures on $\RR^d$.
		
		$(Con 2)$ For all $\varepsilon>0$, $\eta>0$, there exists $\delta_0>0$ and $n_0\in\NN$ such that for all $n\geq n_0$ and for all discrete-valued $\sigma(X^n_s;s\in[0,T])$-stopping times $\beta$ with $0\leq \beta+\delta_0\leq T$, it holds that
		\begin{equation}
		\sup_{\delta\in[0,\delta_0]}\PP\left(|X^n_{\beta+\delta}-X^n_{\beta}|\geq \eta\right)\leq \varepsilon\,.
		\end{equation}
	\end{lem}
It is now  sufficient to justify conditions $(Con 1)$ and $(Con 2)$:

$\bullet$ \textit{Step 1: Checking $(Con 1)$. }  
For any $\varepsilon>0$, there exists a compact subset $U_\varepsilon:=\{x:~|x|^2\leq \frac{K}{\varepsilon}\}$ such that by Markov's inequality
\begin{equation*}
\mc{L}(X_t^{1,N})~\big((U_\varepsilon)^c\big)=\PP\left(|X_t^{1,N}|^2> \frac{K}{\varepsilon}\right)\leq \frac{\varepsilon\EE[|X_t^{1,N}|^2]}{K}\leq\varepsilon,\quad \forall ~N\geq 2\,,
\end{equation*}
where we have used Lemma \ref{lem2mon} in the last inequality.
This means that for each $t\in[0,T]$, the sequence $\{\mc{L}(X_t^{1,N})\}_{N\geq 2}$ is tight, which verifies  condition $(Con 1)$ in Lemma \ref{lemAldous}.

$\bullet$ \textit{Step 2: Checking $(Con 2)$. }   Let $\beta$ be a $\sigma(X_s^{1,N};s\in[0,T])$-stopping time with discrete values such that $\beta+\delta_0\leq T$. 
Recalling \eqref{particle}, we have
\begin{align*}
X_{\beta+\delta}^{1,N}-X_{\beta}^{1,N}=-\int_{\beta}^{\beta+\delta }\lambda(X_s^{1,N}-X_\alpha(\mu_s^N))ds+\sigma\int_{\beta}^{\beta+\delta}D(X_s^{1,N}-X_\alpha(\mu_s^N))dB_s^1\,.
\end{align*}
Notice that
\begin{align}\label{es1}
&\EE\left[\left|\int_{\beta}^{\beta+\delta }\lambda(X_s^{1,N}-X_\alpha(\mu_s^N))ds\right|^2\right]\leq \lambda^2\delta \int_0^T\EE\left[|X_s^{1,N}-X_\alpha(\mu_s^N)|^2\right]ds\notag\\
\leq & 2 \lambda^2\delta T \left(\sup\limits_{t\in[0,T]}\EE\left[|X_t^{1,N}|^2\right] +\sup\limits_{t\in[0,T]}\EE\left[|X_\alpha(\mu^N_t)|^2\right]\right)\leq 2TK\lambda^2\delta\,,
\end{align}
where we have used Lemma \ref{lem2mon} in the last inequality. Further we apply It\^{o}'s isometry 
\begin{align}\label{es2}
&\EE\left[\left|\sigma\int_{\beta}^{\beta+\delta}D (X_s^{1,N}-X_\alpha(\mu_s^N))dB_s^1\right|^2\right]
= \sigma^2\EE\left[\int_{\beta}^{\beta+\delta}|X_s^{1,N}-X_\alpha(\mu_s^N)|^2ds\right]
		\notag\\
&\leq
	 \sigma^2 \delta^{\frac{1}{2}}  \EE \left[\left(\int_{0}^{T}|X_s^{1,N}-X_\alpha(\mu_s^N)|^4ds\right)^{\frac{1}{2}}\right]
	 \leq  \sigma^2 \delta^{\frac{1}{2}}
	  \left(\int_0^T\EE[|X_s^{1,N}-X_\alpha(\mu_s^N)|^4] ds\right)^\frac{1}{2}
	  	\notag\\
&\leq
	\sigma^2 \delta^{\frac{1}{2}} T^{\frac{1}{2}}(8K)^{\frac{1}{2}}\,.
\end{align} 
Combining estimates \ref{es1} and \ref{es2} one has
\begin{equation}
\EE[|X_{\beta+\delta}^{1,N}-X_{\beta}^{1,N}|^2]\leq C(\lambda,\sigma,T,K)\left( \delta^{\frac{1}{2}} + \delta\right)  \,.
\end{equation}
Hence, for any $\varepsilon>0$, $\eta>0$, there exists some $\delta_0>0$ such that for all $N\geq 2$ it holds that
\begin{equation}
\sup_{\delta\in[0,\delta_0]}\PP\left(|X_{\beta+\delta}^{1,N}-X_{\beta}^{1,N}|^2\geq \eta\right)\leq \sup_{\delta\in[0,\delta_0]}\frac{\EE\left[|X_{\beta+\delta}^{1,N}-X_{\beta}^{1,N}|^2\right]}{\eta}\leq \varepsilon\,.
\end{equation}
This justifies condition $Con 2$ in Lemma \ref{lemAldous} and completes the proof of Theorem \ref{thmtight}.  
	\end{proof}

As a consequence of the tightness in Theorem \ref{thmtight}, we obtain the following results.
\begin{lem}\label{lemtight}
	\begin{enumerate}
		\item  There exist a subsequence of $\{\mu^N\}_{N\geq 2}$ (denoted w.l.o.g. by itself) and a random measure $\mu: \Omega \mapsto \mc{P}(\CR)$ such that
		\begin{equation}
\mu^N\rightharpoonup\mu \mbox{ in law as }N\to \infty \,,
		\end{equation}
		which is equivalently  to say $\mc{L}(\mu^N)$ converges weakly to $\mc{L}(\mu)$ in $\mc{P}(\mc{P}(\CR))$
		\item For the subsequence in $(1)$, the time marginal $\mu_t^N$ of  $\mu^N$,  as $\mc{P}(\RR^d)$ valued random measure converges in law to $\mu_t\in \mc{P}(\RR^d)$, the time marginal of $\mu$. Namely $\mc{L}(\mu_t^N)$ converges weakly to $\mc{L}(\mu_t)$ in $\mc{P}(\mc{P}(\R^d))$
	\end{enumerate}
\end{lem}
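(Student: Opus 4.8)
The plan is to obtain both assertions as soft consequences of the tightness established in Theorem \ref{thmtight}, using Prokhorov's theorem for part~(1) and the continuous mapping theorem for part~(2). I record first that $\CR=\mc{C}([0,T];\R^d)$ is a Polish space, so $\mc{P}(\CR)$ endowed with the weak topology is again Polish, and Prokhorov's theorem applies to the laws $\mc{L}(\mu^N)\in\mc{P}(\mc{P}(\CR))$.

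For part~(1), since Theorem \ref{thmtight} guarantees that $\{\mc{L}(\mu^N)\}_{N\geq 2}$ is tight in $\mc{P}(\mc{P}(\CR))$, Prokhorov's theorem yields relative compactness in the topology of weak convergence. Hence there exist a subsequence (still denoted by $\{\mu^N\}$) and a limit law $\Pi\in\mc{P}(\mc{P}(\CR))$ with $\mc{L}(\mu^N)\rightharpoonup\Pi$. Taking $\mu$ to be any $\mc{P}(\CR)$-valued random variable with $\mc{L}(\mu)=\Pi$, we obtain $\mu^N\rightharpoonup\mu$ in law; by the very definition of convergence in distribution for $\mc{P}(\CR)$-valued random variables this is exactly the statement that $\mc{L}(\mu^N)$ converges weakly to $\mc{L}(\mu)$ in $\mc{P}(\mc{P}(\CR))$.

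For part~(2), I would transport the convergence of part~(1) through the time-$t$ marginal map. Let $e_t:\CR\to\R^d$, $e_t(\omega)=\omega(t)$, be the evaluation map, which is continuous for the uniform topology since $|e_t(\omega_1)-e_t(\omega_2)|\leq\|\omega_1-\omega_2\|_\infty$, and let $\Phi_t:\mc{P}(\CR)\to\mc{P}(\R^d)$ be the pushforward $\Phi_t(m)=(e_t)_\#m$. Then $\Phi_t$ is continuous for weak convergence: if $m^n\rightharpoonup m$ in $\mc{P}(\CR)$, then for every bounded continuous $\phi:\R^d\to\R$ the composition $\phi\circ e_t$ is bounded and continuous on $\CR$, whence $\int_{\R^d}\phi\,\rd\Phi_t(m^n)=\int_{\CR}(\phi\circ e_t)\,\rd m^n\to\int_{\CR}(\phi\circ e_t)\,\rd m=\int_{\R^d}\phi\,\rd\Phi_t(m)$. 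Since $\Phi_t(\mu^N)=\frac{1}{N}\sum_{i=1}^N\delta_{X_t^{i,N}}=\mu_t^N$ and $\Phi_t(\mu)=\mu_t$, applying the continuous mapping theorem to $\mu^N\rightharpoonup\mu$ in law gives $\mu_t^N=\Phi_t(\mu^N)\rightharpoonup\Phi_t(\mu)=\mu_t$ in law, i.e. $\mc{L}(\mu_t^N)\rightharpoonup\mc{L}(\mu_t)$ in $\mc{P}(\mc{P}(\R^d))$.

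I do not expect a genuine obstacle here, as the whole argument is standard soft analysis built on Theorem \ref{thmtight}. The only point demanding care is the bookkeeping across the three layers of measures: the $\mu^N$ are random variables valued in $\mc{P}(\CR)$, their laws live in $\mc{P}(\mc{P}(\CR))$, and the continuous mapping theorem must be invoked at the level of these random variables so that convergence in law is preserved under the continuous map $\Phi_t$. Verifying the continuity of $\Phi_t$ and the elementary identity $\Phi_t(\mu^N)=\mu_t^N$ is precisely what makes this step legitimate.
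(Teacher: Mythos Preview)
Your proposal is correct and follows essentially the same approach as the paper. Both parts~(1) and~(2) match: part~(1) is Prokhorov's theorem applied to the tightness from Theorem~\ref{thmtight}, and for part~(2) the paper likewise verifies that the time-marginal map $\nu\mapsto\nu_t$ is continuous for the weak topology (by the same test-function computation $\int_{\R^d}\phi\,d\nu_t=\int_{\CR}\phi(\textbf{x}_t)\,d\nu$ with $\textbf{x}\mapsto\phi(\textbf{x}_t)$ bounded continuous) and then transports the convergence in law through it; you simply package this step as the continuous mapping theorem applied to $\Phi_t=(e_t)_\#$.
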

\begin{proof}
	By Prokhorov's theorem, assertion (1) follows form the tightness of $\{\mc{L}(\mu^N)\}_{N\geq 2}$ in $\mc{P}(\mc{P}(\CR))$ as shown in Theorem \ref{thmtight}. 
	
	As for assertion $(2)$, we first notice that a sequence $\nu^n \in \mc{P}(\CR) $ that converges weakly to $\nu \in \mc{P}(\CR)$ will imply that $\nu_t^n \in \mc{P}(\R^d) $ converges weakly to $\nu_t\in\mc{P}(\R^d)$ for each time $t$. Indeed, for each $\phi\in\mc{C}_b(\R^d)$, we have  $ \int_{\R^d}\phi(x)\nu_t^n(dx) = \int_{\CR}\phi(\textbf{x}_t)\nu^n(d\textbf{x})  $ (see \cite[Lemma 2.8]{li2019mean}). Note that for all $\textbf{x}\in \CR$, $\textbf{x}\mapsto \phi(\textbf{x}_t)$ is a bounded continuous functional on $\CR$, which leads to 
	\begin{align*}
	\lim\limits_{n\to \infty }   \int_{\R^d}\phi(x)\nu_t^n(dx) 
=\lim\limits_{n\to \infty }   \int_{\CR}\phi(\textbf{x}_t)\nu^n(d\textbf{x})  
	&=  \int_{\CR}\phi(\textbf{x}_t)\nu(d\textbf{x})  
	&
	=\int_{\R^d}\phi(x)\nu_t(dx) \,.
	\end{align*}
	
	Now we consider a bounded continuous functional $\Gamma : \mc{P}(\R^d)\to \RR$, then one defines $\Gamma_1: \mc{P}(\CR)\to \RR$ as
	\begin{equation}
	\Gamma_1 (\nu):=\Gamma(\nu_t)\mbox{ for any }\nu \in \mc{P}(\CR)\,.
	\end{equation}
	This means that $\Gamma_1$ is a bounded continuous functional on $\mc{P}(\CR)$ according to what has been justified. Consequently, 
	\begin{equation}
	\EE[\Gamma_1(\mu^N)]\to \EE[\Gamma_1(\mu)]\Rightarrow \EE[\Gamma(\mu_t^N)]\to \EE[\Gamma(\mu_t)]\,,
	\end{equation}
	which implies assertion $(2)$.
	\end{proof}

\section{Identification of the limit measure via PDE \eqref{PDE}}

\begin{definition}\label{defweak}
	We say $\mu_t\in \mc{C}([0,T];\mc{P}_2(\RR^d))$ is a weak solution to PDE \eqref{PDE}	 if 
\begin{itemize}
	\item[(i)] The continuity in time is in $\mc{C}_b'$ topology, namely it holds
	 \begin{equation}\label{continu}
\int_{\R^d} \phi(x)\mu_{t_n}(dx)\to \int_{\R^d} \phi(x)\mu_{t}(dx)
	 \end{equation}
	for all $\phi\in \mc{C}_b(\R^d)$ and $t_n\to t$\,;
	\item[(ii)] The following holds
	\begin{align}
	&\la \varphi(x) ,\mu_t (dx)\ra-\la \varphi(x),\mu_0(dx) \ra +\lambda\int_0^t\left\la (x-X_\alpha(\mu_s))\cdot \nabla\varphi(x), \mu_s (dx)\right\ra ds\notag\\
	&\quad-\frac{\sigma^2}{2}\int_0^t \sum_{k=1}^d \left\la (x-X_\alpha(\mu_s))_k^2\frac{\partial^2}{\partial{x_k}^2}\varphi(x),\mu_s (dx)\right\ra ds=0, 
	\end{align}
	for all $\varphi\in \mc{C}_c^2(\RR^d)$\,.
\end{itemize}
\end{definition}

First, for each $\varphi\in \mc{C}_c^2(\R^d)$, we define a functional on $\mc{P}(\CR)$ as following
\begin{align}
F_{\varphi}(\nu)&:=\la \varphi(\textbf{x}_t),\nu(d\textbf{x})\ra-\la \varphi(\textbf{x}_0),\nu(d\textbf{x}) \ra +\lambda\int_0^t\left\la (\textbf{x}_s-X_\alpha(\nu_s))\cdot \nabla\varphi(\textbf{x}_s), \nu(d\textbf{x})\right\ra ds\notag\\
&\quad-\frac{\sigma^2}{2}\int_0^t \sum_{k=1}^d \left\la (\textbf{x}_s-X_\alpha(\nu_s))_k^2\frac{\partial^2}{\partial{x_k}^2}\varphi(\textbf{x}_s),\nu (d\textbf{x})\right\ra ds\notag\\
&=\la \varphi(x) ,\nu_t (dx)\ra-\la \varphi(x),\nu_0(dx) \ra +\lambda\int_0^t\left\la (x-X_\alpha(\nu_s))\cdot \nabla\varphi(x), \nu_s (dx)\right\ra ds\notag\\
&\quad-\frac{\sigma^2}{2}\int_0^t\sum_{k=1}^d\left\la (x-X_\alpha(\nu_s))_k^2\frac{\partial^2}{\partial{x_k}^2}\varphi(x),\nu_s (dx)\right\ra ds \,,
\end{align}
for all $\nu\in \mc{P}(\CR)$ and $\textbf{x}\in \CR$. Recall that here
\begin{equation}
X_\alpha(\nu_s) =\frac{\int_{\R^d}xe^{-\alpha\TE(x)}\nu_s(dx)}{\int_{\R^d}e^{-\alpha\TE(x)}\nu_s(dx)}=:\frac{\la xe^{-\alpha\TE(x)},\nu_s(dx)\ra}{\la e^{-\alpha\TE(x)}, \nu_s(dx)\ra}\,.
\end{equation}

Then we have the following estimate.
\begin{proposition}\label{prop}
	Let $\TE$ satisfy Assumption \ref{asum} and $\mu_0\in \mc{P}_4(\RR^d)$. For any $N\geq 2$, assume that $\{(X_t^{i,N})_{t\in[0,T]}\}_{i=1}^N$ is the unique solution to the particle system \eqref{particle} with $\mu_0^{\otimes N}$-distributed initial data $\{X_0^{i,N}\}_{i=1}^N$. There exists a constant $C>0$ depending only on $\sigma,K,T$, and $\norm{\nabla\varphi}_\infty$ such that
\begin{equation}
\EE[|F_{\varphi}(\mu^N)|^2]\leq \frac{C}{N}\,,
\end{equation}
where $\mu^N=\frac{1}{N}\sum_{i=1}^N\delta_{X^{i,N}}$ is the empirical measure.
\end{proposition}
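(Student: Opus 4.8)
The plan is to recognize that $F_\varphi(\mu^N)$ is, up to sign, exactly the martingale remainder in the It\^o--Doeblin expansion of $\varphi(X^{i,N}_t)$ averaged over the $N$ particles, and then to exploit the independence of the driving Brownian motions $\{B^i\}$ to extract the factor $1/N$. First I would write out $F_\varphi(\mu^N)$ explicitly: since $\mu^N_t=\frac1N\sum_i\delta_{X^{i,N}_t}$, every pairing $\la\cdot,\mu^N(d\mathbf{x})\ra$ becomes an average over $i$, so
\[
F_\varphi(\mu^N)=\frac1N\sum_{i=1}^N\Big[\varphi(X^{i,N}_t)-\varphi(X^{i,N}_0)+\lambda\int_0^t (X^{i,N}_s-X_\alpha(\mu^N_s))\cdot\nabla\varphi(X^{i,N}_s)\,ds-\frac{\sigma^2}{2}\int_0^t\sum_{k=1}^d (X^{i,N}_s-X_\alpha(\mu^N_s))_k^2\,\tfrac{\partial^2}{\partial{x_k}^2}\varphi(X^{i,N}_s)\,ds\Big].
\]

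Next I would apply It\^o's formula to $\varphi(X^{i,N}_t)$ along the dynamics \eqref{particle}. The drift contributes precisely $-\lambda(X^{i,N}_s-X_\alpha(\mu^N_s))\cdot\nabla\varphi$, and because the diffusion matrix $D(X^{i,N}_s-X_\alpha(\mu^N_s))$ is \emph{diagonal} and each $B^i$ is a $d$-dimensional Wiener process, the second-order It\^o term is exactly $\frac{\sigma^2}{2}\sum_k (X^{i,N}_s-X_\alpha(\mu^N_s))_k^2\,\tfrac{\partial^2}{\partial{x_k}^2}\varphi$. Thus all the Lebesgue-integral terms in the bracket cancel against the It\^o expansion, leaving only the stochastic-integral part, so that
\[
F_\varphi(\mu^N)=\frac{\sigma}{N}\sum_{i=1}^N M^i_t,\qquad M^i_t:=\int_0^t \nabla\varphi(X^{i,N}_s)\cdot D(X^{i,N}_s-X_\alpha(\mu^N_s))\,dB^i_s.
\]

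Then, to estimate the second moment, I expand $\EE[|F_\varphi(\mu^N)|^2]=\frac{\sigma^2}{N^2}\sum_{i,j}\EE[M^i_t M^j_t]$. The diagonal terms are controlled by It\^o's isometry: since $\varphi\in\mc{C}_c^2(\RR^d)$ gives $\norm{\nabla\varphi}_\infty<\infty$, and using the uniform moment bound of Lemma \ref{lem2mon},
\[
\EE[(M^i_t)^2]=\EE\int_0^t\sum_{k=1}^d\big(\tfrac{\partial}{\partial x_k}\varphi(X^{i,N}_s)\big)^2 (X^{i,N}_s-X_\alpha(\mu^N_s))_k^2\,ds\leq 2KT\,\norm{\nabla\varphi}_\infty^2,
\]
a bound uniform in $i$ and $N$. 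The off-diagonal terms vanish: for $i\neq j$ the processes $B^i,B^j$ are independent, their cross-variation is zero, and hence the stochastic integrals are orthogonal in $L^2$, giving $\EE[M^i_t M^j_t]=0$. Summing the $N$ surviving diagonal terms yields $\EE[|F_\varphi(\mu^N)|^2]\leq \frac{2\sigma^2 KT\norm{\nabla\varphi}_\infty^2}{N}=\frac{C}{N}$ with $C=C(\sigma,K,T,\norm{\nabla\varphi}_\infty)$.

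The point requiring care is the two-fold structure of the argument. On the computational side, one must check that the It\^o correction produced by the diagonal diffusion $D(\cdot)$ reproduces exactly the second-order term appearing in $F_\varphi$; this is where the specific structure of \eqref{particle} is used. On the probabilistic side, the mechanism generating the $1/N$ rate is the $L^2$-orthogonality of the martingales $M^i$, which holds because $[B^i,B^j]\equiv 0$ for $i\neq j$. It is essential that this orthogonality persists even though the integrands are statistically dependent across particles through the common empirical consensus point $X_\alpha(\mu^N_s)$ — the cross expectations vanish regardless of that dependence. Once these two facts are in place, the variance bound follows directly from It\^o's isometry together with Lemma \ref{lem2mon}.
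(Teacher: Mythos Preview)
Your proof is correct and follows essentially the same route as the paper: expand $F_\varphi(\mu^N)$ over the empirical measure, apply It\^o's formula to $\varphi(X^{i,N}_t)$ so that only the stochastic integrals survive, and then use the $L^2$-orthogonality of the martingales driven by independent Brownian motions together with It\^o's isometry and Lemma~\ref{lem2mon}. Your explicit remark that the orthogonality $\EE[M^i_tM^j_t]=0$ for $i\neq j$ holds despite the integrands being coupled through $X_\alpha(\mu^N_s)$ is a useful clarification the paper leaves implicit.
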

\begin{proof}
	Using the definition of $F_{\varphi}$ one has
	\begin{align}
	F_{\varphi}(\mu^N)
	&=\frac{1}{N}\sum_{i=1}^{N}\varphi(X_t^{i,N})-\frac{1}{N}\sum_{i=1}^{N}\varphi(X_0^{i,N}) +\lambda\int_0^t \frac{1}{N}\sum_{i=1}^{N}(X_s^{i,N}-X_\alpha(\mu_s^N))\cdot\nabla \varphi(X_s^{i,N})ds\notag\\
	&\quad  -\frac{\sigma^2}{2}\int_0^t\frac{1}{N}\sum_{i=1}^{N} \sum_{k=1}^d (X_s^{i,N}-X_\alpha(\mu_s^N))_k^2\frac{\partial^2}{\partial{x_k}^2}\varphi(X_s^{i,N}) ds\notag \\
	&=\frac{1}{N}\sum_{i=1}^{N}\bigg(\varphi(X_t^{i,N})-\varphi(X_0^{i,N}) +\lambda\int_0^t (X_s^{i,N}-X_\alpha(\mu_s^N))\cdot \nabla\varphi(X_s^{i,N})ds\notag\\
	&\qquad-\frac{\sigma^2}{2}\int_0^t\sum_{k=1}^d (X_s^{i,N}-X_\alpha(\mu_s^N))_k^2\frac{\partial^2}{\partial{x_k}^2}\varphi(X_s^{i,N}) ds\bigg)
	\,.
	\end{align}
	For each $i=1,\cdot,N$ and $\varphi \in \mc{C}_c^2(\R^d)$, applying It\^{o}-Doeblin formula gives
	\begin{align}
	\varphi(X_t^{i,N})&=	\varphi(X_0^{i,N})-\int_0^t\lambda\nabla\varphi(X_s^{i,N})\cdot (X_s^{i,N}-X_\alpha(\mu_s^N))ds
	+\sigma\int_0^t D(X_s^{i,N}-X_\alpha(\mu_s^N))\nabla\varphi(X_s^{i,N})\cdot dB_s^i\notag\\
	&\quad +\frac{\sigma^2}{2}\int_0^t \sum_{k=1}^d (X_s^{i,N}-X_\alpha(\mu_s^N))_k\frac{\partial^2}{\partial{x_k}^2}\varphi(X_s^{i,N}) ds\,.
	\end{align}
	This implies that
	\begin{equation}
	F_{\varphi}(\mu^N)=\frac{\sigma}{N}\sum_{i=1}^{N}\int_0^t D(X_s^{i,N}-X_\alpha(\mu_s^N))\nabla\varphi(X_s^{i,N})\cdot dB_s^i\,.
	\end{equation}
	Then it holds that
	\begin{align}
	\EE[|F_{\varphi}(\mu^N)|^2]&=\frac{\sigma^2}{N^2}\EE\left[\left|\sum_{i=1}^{N}\int_0^t D(X_s^{i,N}-X_\alpha(\mu_s^N))\nabla\varphi(X_s^{i,N})\cdot dB_s^i\right|^2\right]\notag\\
	&=\frac{\sigma^2}{N^2}\sum_{i=1}^{N}\EE\left[\left|\int_0^t D(X_s^{i,N}-X_\alpha(\mu_s^N))\nabla\varphi(X_s^{i,N})\cdot dB_s^i\right|^2\right]\notag \\
		&=\frac{\sigma^2}{N^2}\sum_{i=1}^{N}\EE\left[\int_0^t |X_s^{i,N}-X_\alpha(\mu_s^N)|^2|\nabla\varphi(X_s^{i,N})|^2ds\right]\notag\\
		&\leq C(\sigma,K,T,\norm{\nabla\varphi}_\infty)\frac{1}{N}\,,
	\end{align}
	where we have used Lemma \ref{lem2mon} in the last inequality. This completes the proof.
	\end{proof}
	


 By Skorokhod's lemma (see \cite[Theorem 6.7 on page
70]{billingsley1999convergence}),   using Lemma \ref{lemtight} we may find a common probability space $(\Omega,\mc{F},\PP)$ on which the processes $\{\mu^N\}_{N\in\mathbb N}$ converge to some process $\mu$ as a random variable valued in $\mc{P}(\CR)$ almost surely. In particular, we have that for all $t\in [0,T]$ and $\phi\in C_b(\RR^d)$,
\begin{equation}\label{310}
\lim_{N\rightarrow \infty} |\la \phi,\mu_t^N-\mu_t\ra| + \left|X_\alpha(\mu^N_t)-X_\alpha(\mu_t)\right|= 0,\quad \text{a.s.}
\end{equation}
Indeed, according to Assumption \ref{asum}, one has  $xe^{-\alpha \TE(x)}, e^{-\alpha \TE(x)} \in \mc{C}_b(
\R^d)$, which gives
\begin{align}
	\lim_{N\rightarrow \infty} X_\alpha(\mu_t^N) =	\lim_{N\rightarrow \infty} \frac{\la xe^{-\alpha\TE(x)},\mu_t^N(dx)\ra}{\la e^{-\alpha\TE(x)}, \mu_t^N(dx)\ra}=\frac{\la xe^{-\alpha\TE(x)},\mu_t(dx)\ra}{\la e^{-\alpha\TE(x)}, \mu_t(dx)\ra}=X_\alpha(\mu_t)\quad \text{a.s.} 
\end{align}
\begin{lem}{\cite[Lemma 3.3]{carrillo2018analytical}}\label{lemXa}
	Let $\TE$ satisfy Assumption \ref{asum}  and $\mu\in \mc{P}_2(\R^d)$. Then  it holds that
	\begin{equation}
	|X_\alpha(\mu)|^2 \leq b_1+b_2\int_{\RR^d}|x|^2\mu(dx)\,,
	\end{equation}
	where $b_1$ and $b_2$ depends only on $M$, $C_u$, and $C_l$.
\end{lem}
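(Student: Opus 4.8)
The plan is to reduce the estimate to a single probability measure obtained by Gibbs-reweighting $\mu$, and then to exploit a correlation (Chebyshev) inequality so that the Laplace weight $e^{-\alpha\TE}$ can only \emph{decrease} the $\TE$-average; this is precisely what removes any dependence on $\alpha$ from the constants. Concretely, I would set $Z:=\int_{\RR^d}e^{-\alpha\TE(x)}\mu(dx)>0$ and introduce the probability measure $p(dx):=Z^{-1}e^{-\alpha\TE(x)}\mu(dx)$, so that by definition $X_\alpha(\mu)=\int_{\RR^d}x\,p(dx)$. Applying Jensen's inequality to the convex map $v\mapsto|v|^2$ gives the first reduction
\[
|X_\alpha(\mu)|^2=\Big|\int_{\RR^d}x\,p(dx)\Big|^2\le\int_{\RR^d}|x|^2\,p(dx).
\]
Next I would record the elementary pointwise bound, valid for every $x\in\RR^d$,
\[
|x|^2\le M^2+\frac{1}{C_l}\big(\TE(x)-\underline{\TE}\big),
\]
which follows from Assumption \ref{asum}(3): on $\{|x|\le M\}$ the first term already dominates since $\TE\ge\underline{\TE}$, while on $\{|x|>M\}$ the quadratic growth gives $C_l|x|^2\le\TE(x)-\underline{\TE}$. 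Integrating this against $p$ yields $\int_{\RR^d}|x|^2\,p(dx)\le M^2+\tfrac{1}{C_l}\int_{\RR^d}(\TE(x)-\underline{\TE})\,p(dx)$.

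The crux is to control $\int(\TE-\underline{\TE})\,dp$ by a moment of $\mu$ itself, \emph{without} lower-bounding the normalizer $Z$. Here I would use the covariance identity, for the probability measure $\mu$,
\[
\int_{\RR^d}\!\!\TE\,e^{-\alpha\TE}d\mu-\int_{\RR^d}\!\!\TE\,d\mu\int_{\RR^d}\!\!e^{-\alpha\TE}d\mu
=\tfrac12\iint\big(\TE(x)-\TE(y)\big)\big(e^{-\alpha\TE(x)}-e^{-\alpha\TE(y)}\big)\mu(dx)\mu(dy),
\]
whose integrand is pointwise nonpositive because $\TE$ is nondecreasing and $e^{-\alpha\TE}$ nonincreasing in the scalar $\TE$. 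Hence $\int\TE\,e^{-\alpha\TE}d\mu\le Z\int\TE\,d\mu$; dividing by $Z$ and subtracting the constant $\underline{\TE}$ (which cancels on both sides) gives exactly $\int(\TE-\underline{\TE})\,dp\le\int_{\RR^d}(\TE(x)-\underline{\TE})\,\mu(dx)$, finite since $\mu\in\mc{P}_2(\RR^d)$. Assumption \ref{asum}(2) then bounds the right-hand side by $C_u\big(1+\int_{\RR^d}|x|^2\mu(dx)\big)$, and assembling the three displays produces the claim with $b_1=M^2+C_u/C_l$ and $b_2=C_u/C_l$, depending only on $M,C_u,C_l$.

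I expect the only genuinely delicate point to be the correlation step. The tempting alternative—estimating $Z$ from below, e.g. by Jensen's inequality $Z\ge e^{-\alpha\int\TE d\mu}$—introduces a factor $\exp\!\big(\alpha C_u(1+\int|x|^2\mu)\big)$ and thereby destroys both the $\alpha$-independence and the linear form of the bound. Recognizing instead that reweighting by the Gibbs factor $e^{-\alpha\TE}$ can only lower the average of $\TE$ is what makes the clean linear estimate possible.
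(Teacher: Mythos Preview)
The paper does not supply its own proof of this lemma; it is quoted verbatim from \cite[Lemma 3.3]{carrillo2018analytical} and used as a black box. So there is no in-paper argument to compare against, and the relevant question is simply whether your proposal is correct. It is.

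Your argument is complete and the constants $b_1=M^2+C_u/C_l$, $b_2=C_u/C_l$ are indeed independent of $\alpha$. The three ingredients---Jensen's inequality $|X_\alpha(\mu)|^2\le\int|x|^2\,p(dx)$, the pointwise bound $|x|^2\le M^2+C_l^{-1}(\TE(x)-\underline{\TE})$ from the coercivity assumption, and the Chebyshev-type correlation inequality $\int\TE\,dp\le\int\TE\,d\mu$---fit together exactly as you describe. The covariance symmetrization you wrote is the standard way to see the correlation step, and the sign is clear since $(\TE(x)-\TE(y))(e^{-\alpha\TE(x)}-e^{-\alpha\TE(y)})\le0$ pointwise. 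All integrals are finite because $\mu\in\mc P_2(\RR^d)$ and Assumption~\ref{asum}(2) gives $|\TE|\le|\underline{\TE}|+C_u(1+|x|^2)$, while $e^{-\alpha\TE}\le e^{-\alpha\underline{\TE}}$ is bounded.

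Your diagnosis of the ``tempting alternative'' is also on point: lower-bounding the normalizer $Z$ via Jensen would indeed contaminate the constants with an $\alpha$-dependent exponential factor, which is precisely what the statement excludes. The correlation argument is the clean route to $\alpha$-free constants and matches the spirit of the original proof in \cite{carrillo2018analytical}.
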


For each $A>0$, let us take $\phi=|\cdot|^4\wedge A \in\mc{C}_b(\R^d)$. It follows from \eqref{310} that
\begin{align}
\EE\left[\int_{\R^d}(|x|^4\wedge A)\mu_t(x)\right]=\EE\left[\lim_{N\rightarrow \infty}\int_{\R^d}(|x|^4\wedge A)\mu_t^N(x)\right]\leq \lim_{N\rightarrow \infty}\frac{\sum_{i=1}^{N}\EE[|X_t^{i,N}|^4]}{N}\leq K\,,
\end{align}
where we have used Lemma \ref{lem2mon}. Letting $A\rightarrow \infty$, we have 
\begin{align}
\sup_{t\in[0,T]}\EE\left[\int_{\R^d}|x|^4\mu_t(x)\right]\leq K. \label{bd-mu}
\end{align}
 Then Lemma \ref{lemXa} implies that
\begin{equation}\label{Xa1}
\EE[|X_\alpha(\mu_t)|^4]<\infty\,,
\end{equation}
for all $t\in[0,T]$.

Furthermore, it holds that 
\begin{equation}\label{convergence}
\lim_{N\rightarrow \infty} 
\EE\left[
\left|\la \phi,\mu_t^N-\mu_t\ra \right|^2 + |X_\alpha(\mu^N_t)-X_\alpha(\mu_t)|^2
\right]=0,
\end{equation}
 which follows directly from the pointwise convergences of $\la \phi,\mu_t^N-\mu_t\ra$ and $X_\alpha(\mu^N_t)-X_\alpha(\mu_t)$, and the uniform estimate \eqref{est-lem2mon} in Lemma \ref{lem2mon} and \eqref{Xa1}. To see this, let us consider a sequence of random variables $\{X_n\}_{n\geq 1}$, which satisfies that $X_n\to 0~~(n\to\infty)$ pointwisely and $\sup\limits_{n\geq 1 }\EE[|X_n|^4]\leq C$ uniformly in $n$. For all $A>0$, we compute
 \begin{align}
 \EE[|X_n|^2]&=\EE\left[|X_n|^2\mathbb{I}_{|X_n|\leq A}\right]+\EE\left[|X_n|^2\mathbb{I}_{|X_n|> A}\right]\notag\\
 &\leq  \EE\left[|X_n|^2\mathbb{I}_{|X_n|\leq A}\right]+(\EE[|X_n|^4])^{\frac{1}{2}}(\EE\left[\mathbb{I}_{|X_n|> A}\right])^{\frac{1}{2}}
 \,.
 \end{align}
It is obvious that $\EE\left[|X_n|^2\mathbb{I}_{|X_n|\leq A}\right]\to 0 ~~(n\to\infty)$ holds by the dominated convergence theorem. One also notices that
\begin{equation}
(\EE[|X_n|^4])^{\frac{1}{2}}(\EE\left[\mathbb{I}_{|X_n|> A}\right])^{\frac{1}{2}}\leq (\EE[|X_n|^4])^{\frac{1}{2}}\frac{(\EE[|X_n|^4])^{\frac{1}{2}}}{A^2}\leq \frac{C}{A^2}\to 0\quad \mbox{ as }A\to\infty\,,
\end{equation}
which leads to $\EE\left[|X_n|^2\right]\to 0$ as $n\to\infty$.


\begin{thm}\label{thmmean}
	Let $\TE$ satisfy Assumption \ref{asum} and $\mu_0\in \mc{P}_4(\RR^d)$. For any $N\geq 2$, assume that $\{(X_t^{i,N})_{t\in[0,T]}\}_{i=1}^N$ is the unique solution to the particle system \eqref{particle} with $\mu_0^{\otimes N}$-distributed initial data $\{X_0^{i,N}\}_{i=1}^N$.   Then the limit (denoted by $\mu$) of the sequence of the empirical measure $\mu^N=\frac{1}{N}\sum_{i=1}^N\delta_{X^{i,N}}$ exists. Moreover, $\mu$ is deterministic and it is the unique weak solution to PDE \eqref{PDE}.
\end{thm}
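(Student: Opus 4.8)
The plan is to show that the subsequential limit $\mu$ produced in Lemma \ref{lemtight} is, almost surely, the unique weak solution to PDE \eqref{PDE} in the sense of Definition \ref{defweak}; uniqueness then forces $\mu$ to be deterministic and promotes the subsequential convergence to convergence of the whole sequence. The bridge between the particle system and the PDE is the functional $F_\varphi$: by its very construction, $\nu\in\mc{P}(\CR)$ satisfies part (ii) of Definition \ref{defweak} precisely when $F_\varphi(\nu)=0$ for every $\varphi\in\mc{C}_c^2(\R^d)$. Proposition \ref{prop} already tells us that $\EE[|F_\varphi(\mu^N)|^2]\leq C/N\to 0$, i.e. $F_\varphi(\mu^N)\to 0$ in $L^2(\Omega)$; since $F_\varphi$ is a measurable functional of the measure alone (the denominator $\la e^{-\alpha\TE},\nu_s\ra$ never vanishes), this $L^2$ bound is preserved on the Skorokhod space on which $\mu^N\to\mu$ almost surely. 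Thus the whole argument reduces to passing to the limit $F_\varphi(\mu^N)\to F_\varphi(\mu)$ and concluding $F_\varphi(\mu)=0$ almost surely.

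I would carry out this passage to the limit term by term, working on the Skorokhod space where \eqref{310} and \eqref{convergence} hold. The two linear terms $\la\varphi,\mu_t^N\ra$ and $\la\varphi,\mu_0^N\ra$ converge in $L^2(\Omega)$ directly from \eqref{convergence} (with $\phi=\varphi\in\mc{C}_b$). For the nonlocal drift and diffusion terms I would expand $(x-X_\alpha(\mu_s^N))\cdot\nabla\varphi(x)$ and $(x-X_\alpha(\mu_s^N))_k^2\,\partial_{x_k}^2\varphi(x)$, so that the $s$-integrands become finite linear combinations of terms $c_s^N\,\la\psi,\mu_s^N\ra$, where $c_s^N$ is either $1$, a component of $X_\alpha(\mu_s^N)$, or a product of two such components, and $\psi$ ranges over the bounded continuous functions $x\cdot\nabla\varphi,\ \nabla\varphi,\ x_k^2\partial_{x_k}^2\varphi,\ x_k\partial_{x_k}^2\varphi,\ \partial_{x_k}^2\varphi$ (all bounded because $\varphi$ has compact support). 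For each fixed $s$, every factor $\la\psi,\mu_s^N\ra$ and $X_\alpha(\mu_s^N)$ converges in $L^2(\Omega)$ by \eqref{convergence}, while $X_\alpha(\mu_s^N)$ and its components' squares are uniformly integrable thanks to the $L^4$-bound \eqref{Xa1} and Lemma \ref{lem2mon}; hence each product converges in $L^1(\Omega)$. Integrating in $s$ and invoking the uniform-in-$(s,N)$ bounds of Lemma \ref{lem2mon} to justify exchanging $\lim_N$ with $\int_0^t ds$ (bounded convergence on the finite interval), I obtain $F_\varphi(\mu^N)\to F_\varphi(\mu)$ in $L^1(\Omega)$. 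Comparing with $F_\varphi(\mu^N)\to 0$ yields $F_\varphi(\mu)=0$ almost surely.

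Next I would remove the dependence on the fixed test function and check condition (i). Choosing a countable family $\{\varphi_k\}$ dense in $\mc{C}_c^2(\R^d)$ for the $\mc{C}^2$-norm (with supports inside a common compact set along approximating sequences), the previous step gives a single null set off which $F_{\varphi_k}(\mu)=0$ for all $k$; since $\varphi\mapsto F_\varphi(\mu)$ is continuous in the $\mc{C}^2$-norm once $\mu$ has a finite second moment, which holds by \eqref{bd-mu}, it follows that $F_\varphi(\mu)=0$ for every $\varphi\in\mc{C}_c^2(\R^d)$, almost surely. The continuity-in-time requirement (i) of Definition \ref{defweak} is automatic: $\mu$ is supported on continuous paths, so $\la\phi,\mu_{t_n}\ra=\la\phi(\mathbf{x}_{t_n}),\mu(d\mathbf{x})\ra\to\la\phi(\mathbf{x}_t),\mu(d\mathbf{x})\ra=\la\phi,\mu_t\ra$ by dominated convergence for each $\phi\in\mc{C}_b(\R^d)$ and $t_n\to t$, while $\mu_t\in\mc{P}_2(\R^d)$ by \eqref{bd-mu}. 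Therefore $\mu$ is, almost surely, a weak solution to PDE \eqref{PDE}.

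Finally, the uniqueness of the weak solution (Lemma \ref{lemuni}) forces $\mu$ to coincide almost surely with the deterministic law of the McKean process \eqref{Mckean}; in particular $\mu$ is deterministic. Determinism then upgrades the convergence: every subsequence of $\{\mu^N\}_{N\geq2}$ admits, by the tightness of Theorem \ref{thmtight} together with the argument above, a further subsequence converging in law to this same deterministic $\mu$, so the entire sequence converges (and, the limit being deterministic, in probability). I expect the genuine obstacle to be the second step, namely passing to the limit in the nonlinear, nonlocal terms built from $X_\alpha(\mu_s^N)$: the functional $F_\varphi$ is \emph{not} continuous on $\mc{P}(\CR)$, so the convergence has to be wrung out of the weak convergence combined with the uniform moment estimates, which is exactly the difficulty flagged in the introduction that defeats the classical coupling approach.
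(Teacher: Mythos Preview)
Your proposal is correct and follows essentially the same route as the paper's proof: work on the Skorokhod space, pass to the limit in $F_\varphi(\mu^N)$ term by term using \eqref{convergence} together with the uniform moment bounds, combine with Proposition \ref{prop} to obtain $F_\varphi(\mu)=0$ a.s., and then invoke Lemma \ref{lemuni}. The only cosmetic differences are your full monomial expansion of the nonlinear terms (the paper instead uses a telescoping split into $I_1^N+I_2^N$ and $I_3^N+I_4^N$) and your explicit countable-density argument to get a single null set for all test functions, which the paper leaves implicit.
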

\begin{proof}
Suppose the $\mc{P}(\CR)$-valued random variable $\mu$ is the limit of a subsequence of the empirical measure $\mu^N=\frac{1}{N}\sum_{i=1}^N\delta_{X^{i,N}}$. W.l.o.g., Denote the subsequence by itself. We may continue to work on the above common probability space $(\Omega,\mc{F},\PP)$ by Skorokhod's lemma where the convergence is holding almost surely (see \eqref{310} for instance). We may first check that  $\mu_t$ is a.s. continuous in time in the sense of \eqref{continu}. Indeed for any $\phi\in\mc{C}_b(\R^d)$ and $t_n\to t$ we may apply dominated convergence theorem
\begin{equation*}
\int_{\CR}\phi(\textbf{x}_{t_n})\mu(d\textbf{x})\to \int_{\CR}\phi(\textbf{x}_{t})\mu(d\textbf{x})\quad \text{a.s.,}
\end{equation*}
which gives
\begin{equation*}
\int_{\R^d}\phi(x)\mu_{t_n}(d{x})\to \int_{\R^d}\phi(x)\mu_t(d{x})\quad\text{a.s.}
\end{equation*}

For $\varphi\in \mc{C}_c^2(\RR^d)$, using the convergence result in \eqref{convergence} one has
	\begin{equation}\label{est1}
	\lim_{N\rightarrow \infty} \EE\left[|(\la \varphi(x) ,\mu_t^N (dx)\ra-\la \varphi(x),\mu_0^N(dx) \ra) -(\la \varphi(x) ,\mu_t (dx)\ra-\la \varphi(x),\mu_0(dx) \ra)|\right]=0\,.
	\end{equation}
	
     Further we notice that
{\small      \begin{align}
     &\left|\int_0^t\la (x-X_\alpha(\mu_s^N))\cdot \nabla\varphi(x), \mu_s^N (dx)\ra ds -\int_0^t\la (x-X_\alpha(\mu_s))\cdot \nabla\varphi(x), \mu_s (dx)\ra ds\right|\notag\\
     \leq& \int_0^t\left|\la (x-X_\alpha(\mu_s^N))\cdot \nabla\varphi(x), \mu_s^N (dx)-\mu_s (dx)\ra \right| ds +\int_0^t\left|\la (X_\alpha(\mu_s)-X_\alpha(\mu_s^N))\cdot \nabla\varphi(x), \mu_s (dx)\ra\right| ds \notag\\
     =:&\int_0^t|I_1^N(s)|ds+\int_0^t|I_2^N(s)|ds\,.
     \end{align}
 }
One computes
\begin{align}
\EE[|I_1^N(s)|]&\leq \EE[|\la x\cdot \nabla \varphi(x),\mu_s^N (dx)-\mu_s (dx)\ra|]+\EE[|X_\alpha(\mu_s^N) \cdot \la\nabla \varphi(x),\mu_s^N (dx)-\mu_s (dx)\ra|]\notag\\
&\leq \EE[|\la x\cdot \nabla \varphi(x),\mu_s^N (dx)-\mu_s (dx)\ra|]+K^{\frac{1}{2}}(\EE[| \la\nabla \varphi(x),\mu_s^N (dx)-\mu_s (dx)\ra|^2])^\frac{1}{2}\,,
\end{align}
where we have used Lemma \ref{lem2mon} in the second inequality.
Since $\varphi$ has a compact support, applying \eqref{convergence} leads to 
\begin{equation}
\lim\limits_{N\to \infty }\EE\left[|I_1^N(s)|\right]=0\,.
\end{equation}
Moreover, the uniform boundedness of $\EE\left[|I_1^N(s)|\right]$ follows directly from \eqref{bd-mu}, \eqref{Xa1}, and the estimates in Lemma \ref{lem2mon}, which by the dominated convergence theorem implies
\begin{equation}\label{I1}
\lim\limits_{N\to \infty }\int _0^t\EE[|I_1^N(s)|]ds=0\,.
\end{equation}
	
	As for  $I_2^N$, we know that
	\begin{equation}
	 \left|\la (X_\alpha(\mu_s)-X_\alpha(\mu_s^N))\cdot \nabla\varphi(x), \mu_s (dx)\ra\right|\leq \norm{\nabla \varphi}_\infty  |X_\alpha(\mu_s)-X_\alpha(\mu_s^N)|\,.
	\end{equation}
	 Hence by \eqref{convergence} it yields that
	\begin{equation}
	\lim\limits_{N\to \infty }\EE[|I_2^N(s)|]=0\,.
	\end{equation}
	Again by the dominated convergence theorem, we have
	\begin{equation}
	\lim\limits_{N\to \infty }\int _0^t\EE[|I_2^N(s)|]ds=0\,.
	\end{equation}
	This combined with \eqref{I1} leads to
	\begin{equation}\label{est2}
\lim_{N\rightarrow \infty}	\EE\left[\left|\int_0^t\la (x-X_\alpha(\mu_s^N))\cdot \nabla\varphi(x), \mu_s^N (dx)\ra ds -\int_0^t\la (x-X_\alpha(\mu_s))\cdot \nabla\varphi(x), \mu_s (dx)\ra ds\right|\right]=0\,.
	\end{equation}

Similarly we split the error
{\small \begin{align}
&\left|\int_0^t\la (x-X_\alpha(\mu_s^N))_k^2\frac{\partial^2}{\partial{x_k}^2}\varphi(x),\mu_s^N (dx)\ra ds -\int_0^t\la (x-X_\alpha(\mu_s))_k^2\frac{\partial^2}{\partial{x_k}^2}\varphi(x),\mu_s (dx)\ra ds \right|\notag\\
\leq &\left|\int_0^t\la (x-X_\alpha(\mu_s^N))_k^2\frac{\partial^2}{\partial{x_k}^2}\varphi(x),\mu_s^N (dx)-\mu_s(dx)\ra ds\right|
+\left|\int_0^t\la ((x-X_\alpha(\mu_s^N))_k^2-(x-X_\alpha(\mu_s))_k^2)\frac{\partial^2}{\partial{x_k}^2}\varphi(x),\mu_s (dx)\ra ds \right| \notag\\
=:& \int_0^t |I_3^N(s)|ds+\int_0^t |I_4^N(s)|ds\,.
\end{align}}
Following the same argument as for $I_1^N$ and $I_2^N$, one has
	\begin{equation}
\lim\limits_{N\to \infty }\int_0^t \EE[|I_3^N(s)|]ds=0 \mbox{ and } \lim\limits_{N\to \infty }\int_0^t \EE[|I_4^N(s)|]ds=0\,.
\end{equation}
	This implies that
{\small 	\begin{equation}\label{est3}
	\lim_{N\rightarrow \infty}\EE\left[\left|\int_0^t \sum_{k=1}^d\la (x-X_\alpha(\mu_s^N))_k^2\frac{\partial^2}{\partial{x_k}^2}\varphi(x),\mu_s^N (dx)\ra ds -\int_0^t\sum_{k=1}^d\la (x-X_\alpha(\mu_s))_k^2\frac{\partial^2}{\partial{x_k}^2}\varphi(x),\mu_s (dx)\ra ds\right|\right]=0\,.
	\end{equation}}

Collecting estimates \eqref{est1}, \eqref{est2} and \eqref{est3} we have
\begin{equation}
\lim_{N\rightarrow \infty}\EE[|F_{\varphi}(\mu^N) -F_{\varphi}(\mu)|]=0 \,.
\end{equation}
Then we have
\begin{equation}
\EE[|F_{\varphi}(\mu)|]\leq \EE[|F_{\varphi}(\mu^N) -F_{\varphi}(\mu)|]+\EE[|F_{\varphi}(\mu^N)|]\leq \EE[|F_{\varphi}(\mu^N) -F_{\varphi}(\mu)|]+\frac{C}{\sqrt{N}}\to 0\quad \mbox{as }N\to\infty\,,
\end{equation}
where we have used Proposition \ref{prop} in the last inequality.
This implies that
\begin{equation}
F_{\varphi}(\mu)=0\quad \text{a.s.}
\end{equation}
In other words, it holds that
\begin{align}
&\la \varphi(x) ,\mu_t (dx)\ra-\la \varphi(x),\mu_0(dx) \ra +\lambda\int_0^t\la (x-X_\alpha(\mu_s))\cdot \nabla\varphi(x), \mu_s (dx)\ra ds\notag\\
&\quad-\frac{\sigma^2}{2}\int_0^t\sum_{k=1}^d\la (x-X_\alpha(\mu_s))_k^2\frac{\partial^2}{\partial{x_k}^2}\varphi(x),\mu_s (dx)\ra ds=0 \quad \text{a.s.}\,,
\end{align}
for any $\varphi\in \mc{C}_c^2(\RR^d)$. 

Until now we have proved that $\mu$ a.s. is a weak solution to PDE \eqref{PDE}. Finally combining the uniqueness of weak solution to \eqref{PDE} (see in Lemma \ref{lemuni} below) and the arbitrariness of the subsequence of $\{\mu^N\}_{N\geq 1}$,  the (deterministic) weak solution $\mu$ to PDE \eqref{PDE} must be the limit of the whole sequence $\{\mu^N\}_{N\geq 1}$. We complete the proof.	\end{proof}

\begin{lem}\label{lemuni}
	Assume that $\mu^1,\mu^2\in \mc{C}([0,T];\mc{P}_2(\RR^d))$ are two weak solutions to PDE \eqref{PDE}  in the sense of Definition \ref{defweak} with the same initial data $\mu_0$. Then it holds that 
	$$\sup\limits_{t\in[0,T]}W_2(\mu_t^1,\mu_t^2)=0\,,$$
	where $W_2$ is the 2-Wasserstein distance.
\end{lem}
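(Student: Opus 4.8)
The plan is to reduce this nonlinear uniqueness statement to the well-posedness of a \emph{linear} SDE by freezing the nonlocal coefficient, and then to close a Gr\"onwall estimate along a synchronous coupling, the decisive ingredient being the Lipschitz stability of the map $\mu\mapsto X_\alpha(\mu)$ in the $W_2$ distance.

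First I would freeze the nonlocality. Since $\mu^i\in\mc{C}([0,T];\mc{P}_2(\RR^d))$ for $i=1,2$, the paths $u^i_t:=X_\alpha(\mu^i_t)$ are well defined, continuous in $t$, and bounded on $[0,T]$: indeed $t\mapsto\int_{\R^d}|x|^2\mu^i_t(dx)$ is continuous (hence bounded) by $W_2$-continuity, so Lemma \ref{lemXa} gives $\sup_{t\in[0,T]}|u^i_t|<\infty$. Regarding each $u^i$ as a \emph{prescribed} continuous path, the weak formulation in Definition \ref{defweak} shows that $\mu^i$ is a weak solution of the linear Fokker--Planck equation with drift $-\lambda(x-u^i_t)$ and diffusion generated by $\sigma D(x-u^i_t)$. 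By the linear well-posedness established in the Appendix, this weak solution is unique and is represented as the time-marginal law of the solution $Y^i$ to the linear SDE
\[
dY^i_t=-\lambda(Y^i_t-u^i_t)\,dt+\sigma D(Y^i_t-u^i_t)\,dB_t,\qquad Y^i_0\sim\mu_0,
\]
that is $\mu^i_t=\mc{L}(Y^i_t)$. Because the coefficients are affine in the state variable, these are strong solutions and may be realized on a common probability space driven by the \emph{same} Brownian motion $B$, with the same initial value $Y^1_0=Y^2_0\sim\mu_0$.

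Next I would run the synchronous coupling. Writing $Z_t:=Y^1_t-Y^2_t$ and applying It\^o's formula to $|Z_t|^2$, the stochastic integral has zero mean, the drift contributes $-2\lambda|Z_t|^2+2\lambda Z_t\cdot(u^1_t-u^2_t)$, and, since $D$ is diagonal with independent Brownian components, the It\^o correction is $\sigma^2\sum_{k=1}^d\bigl|(Y^1_t-u^1_t)_k-(Y^2_t-u^2_t)_k\bigr|^2=\sigma^2|Z_t-(u^1_t-u^2_t)|^2$. Taking expectations and using Young's inequality gives a differential inequality of the form
\[
\frac{d}{dt}\EE|Z_t|^2\;\le\;C_1\,\EE|Z_t|^2+C_2\,|u^1_t-u^2_t|^2,
\]
with $C_1,C_2$ depending only on $\lambda,\sigma$. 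The crucial input is then the local Lipschitz estimate $|X_\alpha(\mu^1_t)-X_\alpha(\mu^2_t)|\le C\,W_2(\mu^1_t,\mu^2_t)$ from \cite[Lemma 3.2]{carrillo2018analytical}, where $C$ depends only on $\alpha$, the constant $L$ of Assumption \ref{asum}, and the uniform second moment bound on the two solutions. Since $(Y^1_t,Y^2_t)$ is a coupling of $(\mu^1_t,\mu^2_t)$ we have $W_2^2(\mu^1_t,\mu^2_t)\le\EE|Z_t|^2$, hence $|u^1_t-u^2_t|^2\le C^2\,\EE|Z_t|^2$. Substituting yields $\frac{d}{dt}\EE|Z_t|^2\le C_3\,\EE|Z_t|^2$ with $\EE|Z_0|^2=0$, so Gr\"onwall's inequality forces $\EE|Z_t|^2=0$ for all $t\in[0,T]$, and therefore $\sup_{t\in[0,T]}W_2(\mu^1_t,\mu^2_t)=0$.

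I expect the main obstacle to be the $W_2$-stability of $\mu\mapsto X_\alpha(\mu)$: controlling the normalizing denominator $\langle e^{-\alpha\TE},\mu\rangle$ away from zero and the numerator uniformly requires the second-moment bounds together with the growth conditions in Assumption \ref{asum}, which is precisely why the quantitative estimate must be imported rather than derived afresh. The other delicate point is the representation step — justifying that an abstract weak PDE solution coincides with the law of the associated linear SDE — but this is exactly the content supplied by the Appendix, so the remaining computations are routine.
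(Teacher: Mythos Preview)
Your proof is correct and follows the same architecture as the paper: freeze $X_\alpha(\mu^i_t)$, invoke the linear Fokker--Planck uniqueness from the Appendix to identify each $\mu^i$ as the law of a synchronously coupled SDE, and then conclude that the two processes coincide. The only difference is in the last step: the paper cites \cite[Theorem 3.2]{carrillo2018analytical} directly for the uniqueness of the nonlinear McKean SDE, whereas you unpack that result by running the Gr\"onwall argument yourself with the $W_2$-Lipschitz estimate on $X_\alpha$ from \cite[Lemma 3.2]{carrillo2018analytical} --- this is precisely how that theorem is proved, so your version is just slightly more self-contained.
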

\begin{proof}
	 We construct two linear processes $(\widehat X_t^i)_{t\in[0,T] }$ $(i=1,2)$ satisfying
	\begin{equation}
	d\widehat X_t^i=-\lambda(\widehat X_t^i-X_\alpha(\mu_t^i))dt+\sigma D(\widehat X_t^i-X_\alpha(\mu_t^i))dB_t\,, \label{Eqn for Vbar}
	\end{equation}
	with the common initial data $\widehat X_0$ distributed  according to $\mu_0$. Above processes are linear because that $\mu^i$ are prescribed.
	Let us denote $\rm{law}(\widehat X_t^i)=\hat {\mu}_t^i$ $(i=1,2)$, which are weak solutions to the following linear PDE
	\begin{align*}
	\partial_t\hat  \mu_t^i=\frac{\sigma^2}{2}\sum_{k=1}^d\frac{\partial^2}{\partial{x_k}^2}((x-X_\alpha(\mu_t^i))_k^2\hat \mu_t^i)-\lambda\nabla
	\cdot((x-X_\alpha(\mu_t^i))\hat \mu_t^i)\,,
	\end{align*}
	where $X_\alpha(\mu_t^i)\in \mc{C}([0,T];\R^d)$ for given $\mu^i\in \mc{C}([0,T],\mc{P}_2(\R^d))$.
	By  the uniqueness of weak solution to the above linear PDE (see Theorem \ref{thmapp} in Appendix) and the fact that $\mu^i$ is also a solution to the above PDE, it follows that
	$\hat  \mu_t^i=\mu_t^i$ $(i=1,2)$. Consequently, the process $(\widehat X_t^i)_{(t\in[0,T] )}=(\OX_t^i)_{(t\in[0,T] )}$ are solutions to the nonlinear SDE \eqref{Mckean}, for which the uniqueness has been obtained in \cite[Theorem 3.2]{carrillo2018analytical}. In particular, it holds that 
	\begin{equation}
	\sup\limits_{t\in[0,T]}\EE\left[|\OX_t^1-\OX_t^2|^2\right]=0\,,
	\end{equation}
	which by the definition of Wasserstein distance implies $$\sup\limits_{t\in[0,T]}W_2(\mu_t^1,\mu_t^2)=\sup\limits_{t\in[0,T]}W_2(\hat \mu_t^1,\hat  \mu_t^2)\leq\sup\limits_{t\in[0,T]}\EE[|\widehat X_t^1-\widehat X_t^2|^2] =\sup\limits_{t\in[0,T]}\EE[|\OX_t^1-\OX_t^2|^2] =0\,.$$ Thus the uniqueness is obtained.
	\end{proof}

\section{Mean-field limit for Particle Swarm Optimization}
In this section we extend our discussions to the model of particle swarm optimization (PSO) proposed recently by Grassi and Pareschi \cite{grassi2020particle}, where they only numerically verified the mean-limit result. We consider PSO based on a continuous description in the form of a system of stochastic differential equations:
\begin{align}\label{PSO}
\begin{cases}
dX_t^{i,N}= V_t^{i,N}d t, \\
dV_t^{i,N}=-\frac{\gamma}{m}V_t^{i,N}dt+\frac{\lambda}{m}(X^\alpha(\rho_t^{N})-X_t^{i,N})dt+\frac{\sigma}{m}D(X^\alpha(\rho_t^{N})-X_t^{i,N})dB_t^i,\quad i=1,\cdots,N\,,
\end{cases}
\end{align}
where the $\RR^d$-valued functions $X_t^{i,N} $ and $V_t^{i,N}$ denote the position and velocity of the $i$-th particle at time $t$, $m>0$ is the inertia weight, $\gamma=1-m\geq0$ is the friction coefficient,  $\lambda>0$ is the acceleration coefficient, $\sigma>0$ is the diffusion coefficient,  and $\{(B_t^i)_{t\geq0}\}_{i=1}^N$ are $N$ independent $d$-dimensional Brownian motions. 
Here the weighted average is given by
\begin{equation}\label{regularizer}
{X}^{\alpha}(\rho_t^{N}):=\frac{\int_{\RR^d}x\omega_{\alpha}^{\mc{E}}(x)\rho_t^{N}(dx)}{\int_{\RR^d}\omega_{\alpha}^{\mc{E}}(x)\rho_t^{N}(dx)},
\end{equation}
with  the empirical measure $\rho^N:=\frac{1}{N}\sum_{i=1}^{N}\delta_{X^{i,N}}$, which is the spacial marginal of
$$f^N=\frac{1}{N}\sum_{i=1}^{N}\delta_{(X^{i,N},V^{i,N})} : \Omega\mapsto \mc{P}(\CR\times \CR)\,.$$

Parallel to Theorem \ref{thmtight}, we can prove the tightness of the empirical measures $\{f^N\}_{N\geq2}$ by verifying the Aldous criteria presented in Lemma \ref{lemAldous}.
\begin{thm}
	Let $\TE$ satisfy Assumption \ref{asum} and $f_0\in \mc{P}_4(\RR^d\times\R^d)$. For any $N\geq 2$, we assume that $\{(X_t^{i,N},V_t^{i,N})_{t\in[0,T]}\}_{i=1}^N$ is the unique solution to the particle system \eqref{PSO} with $f_0^{\otimes N}$-distributed initial data $\{X_0^{i,N},V_0^{i,N}\}_{i=1}^N$. Then the sequence $\{\mc{L}(f^N)\}_{N\geq 2}$ is tight in $\mc{P}(\mc{P}(\CR\times\CR))$.
\end{thm}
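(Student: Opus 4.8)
The plan is to mirror the proof of Theorem~\ref{thmtight}, treating the joint process $(X^{i,N},V^{i,N})$ as a single $\mc{C}([0,T];\RR^{2d})$-valued random variable. Since the pairs $\{(X^{i,N},V^{i,N})\}_{i=1}^N$ are exchangeable, \cite[Proposition 2.2 (ii)]{sznitman1991topics} reduces the tightness of $\{\mc{L}(f^N)\}_{N\geq 2}$ in $\mc{P}(\mc{P}(\CR\times\CR))$ to the tightness of the one-particle laws $\{\mc{L}((X^{1,N},V^{1,N}))\}_{N\geq 2}$ in $\mc{P}(\mc{C}([0,T];\RR^{2d}))$. The latter I would obtain from the Aldous criteria of Lemma~\ref{lemAldous}, now applied in dimension $2d$ to the pair $(X^{1,N},V^{1,N})$.

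The key preliminary input, replacing Lemma~\ref{lem2mon}, is a uniform-in-$N$ moment bound for \eqref{PSO}: there is $K>0$, independent of $N$, with
\begin{equation*}
\sup_{i}\sup_{t\in[0,T]}\EE\left[|X_t^{i,N}|^2+|X_t^{i,N}|^4+|V_t^{i,N}|^2+|V_t^{i,N}|^4\right]+\sup_{t\in[0,T]}\EE\left[|X^\alpha(\rho_t^N)|^2+|X^\alpha(\rho_t^N)|^4\right]\leq K.
\end{equation*}
To prove it I would apply It\^o's formula to the energy $|X_t^{i,N}|^2+|V_t^{i,N}|^2$ (and to its square for the fourth moments), sum over $i$, and take expectations. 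The weighted mean is controlled by Lemma~\ref{lemXa}: since $\int_{\RR^d}|x|^2\rho_t^N(dx)=\frac1N\sum_{i=1}^N|X_t^{i,N}|^2$, one gets $\EE[|X^\alpha(\rho_t^N)|^2]\leq b_1+b_2\frac1N\sum_{i=1}^N\EE[|X_t^{i,N}|^2]$, so the coupling term feeds back only through the averaged second moment, while the friction term $-\frac{\gamma}{m}V_t^{i,N}$ dissipates the velocity energy. A Gr\"onwall argument on the averaged energy then yields the bound uniformly in $N$ and $i$. I expect this moment estimate to be the main obstacle, because unlike the first-order CBO dynamics, \eqref{PSO} is a coupled second-order (kinetic) system, so $X$ and $V$ cannot be estimated separately but must be handled together through a combined Lyapunov functional.

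With the moment bound in hand, condition $(Con 1)$ is immediate: for each $t$, Markov's inequality together with the uniform bound on $\EE[|X_t^{1,N}|^2+|V_t^{1,N}|^2]$ shows that $\{\mc{L}((X_t^{1,N},V_t^{1,N}))\}_{N\geq2}$ is tight on $\RR^{2d}$. For $(Con 2)$, let $\beta$ be a discrete-valued $\sigma((X^{1,N}_s,V^{1,N}_s);s\in[0,T])$-stopping time with $\beta+\delta_0\leq T$. The position increment is a pure drift, $X_{\beta+\delta}^{1,N}-X_\beta^{1,N}=\int_\beta^{\beta+\delta}V_s^{1,N}ds$, so by Cauchy--Schwarz and the moment bound $\EE[|X_{\beta+\delta}^{1,N}-X_\beta^{1,N}|^2]\leq \delta\,\EE[\int_\beta^{\beta+\delta}|V_s^{1,N}|^2ds]\leq C\delta$. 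The velocity increment splits into a drift part, estimated by Cauchy--Schwarz exactly as in \eqref{es1} to give an $\mc{O}(\delta)$ bound, and a stochastic integral $\frac{\sigma}{m}\int_\beta^{\beta+\delta}D(X^\alpha(\rho_s^N)-X_s^{1,N})dB_s^1$, estimated by It\^o's isometry together with the fourth-moment bound exactly as in \eqref{es2} to give an $\mc{O}(\delta^{1/2})$ bound.

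Combining these, one arrives at $\EE[|(X_{\beta+\delta}^{1,N},V_{\beta+\delta}^{1,N})-(X_\beta^{1,N},V_\beta^{1,N})|^2]\leq C(\lambda,\sigma,\gamma,m,T,K)(\delta^{1/2}+\delta)$, and Markov's inequality then yields, for any $\varepsilon,\eta>0$, a $\delta_0>0$ with $\sup_{\delta\in[0,\delta_0]}\PP(|(X_{\beta+\delta}^{1,N},V_{\beta+\delta}^{1,N})-(X_\beta^{1,N},V_\beta^{1,N})|\geq\eta)\leq\varepsilon$ for all $N\geq2$. This verifies $(Con 2)$, completes the check of the Aldous criteria for $(X^{1,N},V^{1,N})$, and hence establishes the tightness of $\{\mc{L}(f^N)\}_{N\geq2}$.
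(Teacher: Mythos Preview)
Your proposal is correct and follows essentially the same approach as the paper: reduction to one-particle tightness via exchangeability, verification of the Aldous criteria, and a preliminary uniform fourth-moment bound obtained by averaging over particles, invoking Lemma~\ref{lemXa} to control $X^{\alpha}(\rho_t^N)$, and closing with Gr\"onwall. The only cosmetic difference is that the paper derives the moment bound via direct integral estimates and Doob's martingale inequality on $|X_t^{i,N}|^4$ and $|V_t^{i,N}|^4$ separately rather than via It\^o's formula on a combined Lyapunov functional, but both routes lead to the same Gr\"onwall inequality and the same conclusion.
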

\begin{proof}
	It is sufficient to justify conditions $(Con 1)$ and $(Con 2)$ in Lemma \ref{lemAldous}.
	
	$\bullet$ \textit{Step 1: Checking $(Con 1)$. }   It is obvious that
	\begin{equation}
	\EE[|X_t^{i,N}|^4]\leq 2^3\EE[|X_0^{i,N}|^4]+2^3T^3\int_0^t \EE[|V_s^{i,N}|^4]ds\,,
	\end{equation}
	holds for each $i=1,\cdots,N$. Applying Doob's martingale inequality,  we further obtain
	\begin{align}
	\EE[|V_t^{i,N}|^4]&\leq  4^3 \EE[|V_0^{i,N}|^4]+4^3\frac{\gamma^4}{m^4}T^3\int_0^t \EE[|V_s^{i,N}|^4]ds+4^3\frac{\lambda^4}{m^4}T^3\int_0^t \EE[|X^\alpha(\rho_s^{N})-X_s^{i,N}|^4]ds\nn\\
	&\quad +\frac{4^7\sigma^4}{3^4m^4}T\int_0^t \EE[|X^\alpha(\rho_s^{N})-X_s^{i,N}|^4]ds\nn\\
	&\leq C\EE[|V_0^{i,N}|^4]+C\int_0^t \EE[|V_s^{i,N}|^4+|X_s^{i,N}|^4]ds+C\int_0^t \EE[|X^\alpha(\rho_s^{N})|^4]ds\,,
	\end{align}
	where $C$ is independent of $N$. Thus it holds that
	\begin{equation}\label{total}
	\EE[|X_t^{i,N}|^4+|V_t^{i,N}|^4]\leq C\EE[|X_0^{i,N}|^4+|V_0^{i,N}|^4]+C\int_0^t \EE[|V_s^{i,N}|^4+|X_s^{i,N}|^4]ds+C\int_0^t \EE[|X^\alpha(\rho_s^{N})|^4]ds\,.
	\end{equation}
	Summing the above estimate over $i=1,\cdots,N$, dividing by $N$ and using the linearity of the expectation, we have
	\begin{align}\label{Gron}
	\EE\int (|x|^4+|v|^4)f_t^N(dx,dv)&\leq C\EE\int (|x|^4+|v|^4)f_0^N(dx,dv)+C\int_0^t (\EE\int (|x|^4+|v|^4)f_s^N(dx,dv))ds\nn\\
	&\quad +C\int_0^t \EE[|X^\alpha(\rho_s^{N})|^4]ds\,.
	\end{align}
	It follows from Lemma \ref{lemXa} that
	\begin{equation}
	|X^\alpha(\rho_s^{N})|^4\leq (b_1+b_2\int |x|^2 \rho_s^N(dx))^2\leq 2(b_1^2+b_2^2\int|x|^4 \rho_s^N(dx))\leq 2(b_1^2+b_2^2\int(|x|^4+|v|^4) f_s^N(dx,dv))\,.
	\end{equation}
	Inserting this into \eqref{Gron} and applying Gronwall's inequality yield that
	\begin{equation}
	\sup\limits_{t\in[0,T]}\EE\int (|x|^4+|v|^4)f_t^N(dx,dv)\leq K\,,
	\end{equation}
	where $K$ is independent of $N$. This implies $\sup\limits_{t\in[0,T]}\EE[|X^\alpha(\rho_s^{N})|^4]\leq 2(b_1^2+b_2^2K)$. Then applying Gronwall's inequality on \eqref{total} we have
	\begin{equation}\label{unibound}
	\sup\limits_{t\in[0,T]}\sup\limits_{i=1,\cdots,N}\EE[|X_t^{i,N}|^4+|V_t^{i,N}|^4]\leq K'\,,
	\end{equation}
	where $K'>0$ is independent of $N$. Then $(Con 1)$ may be verified in a similar way to Theorem \ref{thmtight}.
	
	$\bullet$ \textit{Step 1: Checking $(Con 2)$. }   Let $\beta$ be a $\sigma((X_s^{1,N},V_s^{1,N});s\in[0,T])$-stopping time with discrete values such that $\beta+\delta_0\leq T$.  It is easy to see that
	\begin{equation}
	\EE [|X_{\beta+\delta}^{1,N}-X_{\beta}^{1,N}|^2]\leq \delta\int_0^{T} \EE[|V_s^{1,N}|^2]ds\leq C\delta\,,
	\end{equation}
	where $C>0$ is independent of $N$ by \eqref{unibound}. Furthermore, following similar arguments as in \eqref{es1}-\eqref{es2}, one obtains
	\begin{equation}
	\EE [|V_{\beta+\delta}^{1,N}-V_{\beta}^{1,N}|^2]\leq C(\delta+\delta^{1/2})\,.
	\end{equation}
	Hence $(Con 2)$ is verified.
	\end{proof}

For any $\varphi\in \mc{C}_c^2(\R^d\times \R^d)$,   define a functional on $\mc{P}(\CR\times \CR)$ as following
\begin{align}
F_{\varphi}(f)&:=\la \varphi(\textbf{x}_t,\textbf{v}_t),f(d\textbf{x},d\textbf{v})\ra-\la \varphi(\textbf{x}_0,\textbf{v}_0),f(d\textbf{x},d\textbf{v}) \ra 
+\int_0^t\la \textbf{v}_s\cdot\nabla_x\varphi,f(d\textbf{x},d\textbf{v})\ra ds
\nn\\
&\quad-\frac{\gamma}{m}\int_0^t\la \textbf{v}_s\cdot\nabla_v\varphi,f(d\textbf{x},d\textbf{v})\ra ds+\frac{\lambda}{m}\int_0^t\la (\textbf{x}_s-X_\alpha(\rho_s))\cdot \nabla_v\varphi, f(d\textbf{x},d\textbf{v})\ra ds\notag\\
&\quad -\frac{\sigma^2}{2m^2}\int_0^t\sum_{k=1}^{d}\la (\textbf{x}_s-X_\alpha(\rho_s))_k^2\frac{\partial^2\varphi}{\partial v_k^2},f (d\textbf{x},d\textbf{v})\ra ds
\notag\\
&=\la \varphi(x,v) ,f_t (dx,dv)\ra-\la \varphi(x,v),f_0(dx,dv) \ra +\int_0^t\la v\cdot\nabla_x\varphi,f_s(dx,dv)\ra ds
\nn\\
&\quad-\frac{\gamma}{m}\int_0^t\la v\cdot\nabla_v\varphi,f_s(dx,dv)\ra ds+\frac{\lambda}{m}\int_0^t\la (x-X_\alpha(\rho_s))\cdot \nabla_v\varphi, f_s (dx,dv)\ra ds\notag\\
&\quad-\frac{\sigma^2}{2m^2}\int_0^t\sum_{k=1}^{d}\la (x-X_\alpha(\rho_s))_k^2\frac{\partial^2\varphi}{\partial v_k^2},f_s (dx,dv)\ra ds \,,
\end{align}
for all $f\in \mc{P}(\CR\times \CR)$ and $\textbf{x},\textbf{v}\in \CR$, where $\rho_s(x)=\int_{ \RR^d }f_s(x,dv)$. Then similar to Proposition \ref{prop}, one can easily prove that
\begin{equation}
\EE[|F_{\varphi}(f^N)|^2]\leq \frac{C}{N}\,.
\end{equation}
Finally, following  similar arguments as in Theorem \ref{thmmean}, there exists a subsequence of $\{f^N\}_{N\geq 2}$ converging in law to a deterministic measure $f\in\mc{P}(\CR\times\CR)$, which is the unique weak solution to the following PDE
\begin{equation}\label{PSO eq}
\partial_{t} f_t+v \cdot \nabla_{x} f_t=\nabla_{v} \cdot\left(\frac{\gamma}{m} v f_t-\frac{\lambda}{m}\left(x-{X}^{\alpha}(\rho_t)\right) f_t+\frac{\sigma^{2}}{2 m^{2}} D\left(x-{X}^{\alpha}(\rho_t)\right)^{2} \nabla_{v} f_t\right)\,,
\end{equation}
where $\rho_t(x)=\int_{ \RR^d }f_t(x,dv)$.  This can be summarized in the following theorem
\begin{thm}
	Let $\TE$ satisfy Assumption \ref{asum} and $f_0\in \mc{P}_4(\RR^d\times\R^d)$. For any $N\geq 2$, we assume that $\{(X_t^{i,N},V_t^{i,N})_{t\in[0,T]}\}_{i=1}^N$ is the unique solution to the particle system \eqref{PSO} with $f_0^{\otimes N}$-distributed initial data $\{X_0^{i,N},V_0^{i,N}\}_{i=1}^N$.  Then the limit (denoted by $f$) of the sequence of the empirical measure $f^N=\frac{1}{N}\sum_{i=1}^N\delta_{(X^{i,N},V^{i,N})}$ exists. Moreover, $f$ is deterministic and it is the unique weak solution to PDE PDE \eqref{PSO eq}.
\end{thm}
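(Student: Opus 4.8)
The plan is to follow the road map of Theorem \ref{thmmean} line by line, the only genuinely new feature being that the interaction now couples position and velocity while the nonlocal average $X^\alpha(\rho_s)$ still sees only the spatial marginal $\rho_s$ of $f_s$. By the tightness of $\{\mc{L}(f^N)\}_{N\ge2}$ established above together with Prokhorov's theorem, I would first extract a subsequence of $\{f^N\}_{N\ge2}$ converging in law to a random measure $f\colon\Omega\mapsto\mc{P}(\CR\times\CR)$, and then invoke Skorokhod's representation theorem (as in the derivation of \eqref{310}) to realize this convergence almost surely on a common probability space. Passing the uniform fourth-moment bound \eqref{unibound} to the limit exactly as in \eqref{bd-mu} gives $\sup_{t\in[0,T]}\EE\int(|x|^4+|v|^4)f_t(dx,dv)\le K$, and Lemma \ref{lemXa} applied to the spatial marginal then yields $\EE[|X^\alpha(\rho_t)|^4]<\infty$ for every $t\in[0,T]$, the analogue of \eqref{Xa1}.

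The second step is to upgrade almost-sure convergence to the $L^2$-type convergence needed to pass to the limit in $F_\varphi$. Since $xe^{-\alpha\TE(x)}$ and $e^{-\alpha\TE(x)}$ lie in $\mc{C}_b(\R^d)$ by Assumption \ref{asum} and depend on $x$ only, the weak convergence of the spatial marginals $\rho_t^N\rightharpoonup\rho_t$ gives $X^\alpha(\rho_t^N)\to X^\alpha(\rho_t)$ almost surely, and the truncation argument used to prove \eqref{convergence} (combining pointwise convergence with the uniform fourth moment) promotes this to $\lim_{N\to\infty}\EE[|\la\phi,\rho_t^N-\rho_t\ra|^2+|X^\alpha(\rho_t^N)-X^\alpha(\rho_t)|^2]=0$ for each $\phi\in\mc{C}_b(\R^d)$.

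I would then show $\lim_{N\to\infty}\EE[|F_\varphi(f^N)-F_\varphi(f)|]=0$ by splitting each term of $F_\varphi$ as in the estimates for $I_1^N$ through $I_4^N$. The two boundary terms are handled by \eqref{convergence}; the transport terms $\la v\cdot\nabla_x\varphi,f_s\ra$ and $\la v\cdot\nabla_v\varphi,f_s\ra$ cause no trouble because $\varphi\in\mc{C}_c^2(\R^d\times\R^d)$ forces $v\,\nabla\varphi(x,v)$ to be a bounded continuous functional on $\CR\times\CR$; and the drift and diffusion terms carrying $X^\alpha(\rho_s)$ are treated precisely like $I_1^N,\dots,I_4^N$, with the uniform bounds from \eqref{unibound} and $\EE[|X^\alpha(\rho_t)|^4]<\infty$ justifying the dominated convergence in time. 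Combining this with the martingale estimate $\EE[|F_\varphi(f^N)|^2]\le C/N$ gives $\EE[|F_\varphi(f)|]=0$, hence $F_\varphi(f)=0$ almost surely for every $\varphi\in\mc{C}_c^2(\R^d\times\R^d)$; together with the almost-sure time-continuity of $f_t$ (dominated convergence exactly as in Theorem \ref{thmmean}) this shows that $f$ is almost surely a weak solution of \eqref{PSO eq}.

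The last step, which I expect to be the real obstacle, is uniqueness of the weak solution to \eqref{PSO eq}, since this is what forces $f$ to be deterministic and, by the arbitrariness of the subsequence, identifies it as the limit of the whole sequence. I would mimic Lemma \ref{lemuni}: given two solutions $f^1,f^2$ with a common initial datum, freeze $\rho^i_t$ and build the linear kinetic processes $d\widehat X_t^i=\widehat V_t^i\,dt$ and $d\widehat V_t^i=-\tfrac{\gamma}{m}\widehat V_t^i\,dt+\tfrac{\lambda}{m}(X^\alpha(\rho_t^i)-\widehat X_t^i)\,dt+\tfrac{\sigma}{m}D(X^\alpha(\rho_t^i)-\widehat X_t^i)\,dB_t$, whose laws solve the corresponding linear PDE; identifying these laws with $f^i$ and invoking well-posedness of the nonlinear McKean--PSO SDE would give $f^1=f^2$. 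The difficulty is that, unlike the non-degenerate CBO operator handled in the Appendix, the PSO equation is a degenerate (hypoelliptic) kinetic Fokker--Planck equation with diffusion only in the velocity variable, so the Appendix uniqueness result does not transfer verbatim; supplying uniqueness for the linear kinetic PDE (for instance via a hypoelliptic energy estimate or a duality argument adapted to the degenerate diffusion), together with well-posedness of the associated nonlinear SDE, is the main technical point needed to complete the proof.
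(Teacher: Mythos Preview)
Your proposal follows exactly the route the paper takes: the paper's ``proof'' of this theorem is simply the sentence ``following similar arguments as in Theorem \ref{thmmean}'', so your step-by-step adaptation of the tightness/Skorokhod/$F_\varphi$-convergence scheme, together with the martingale bound $\EE[|F_\varphi(f^N)|^2]\le C/N$, is precisely what is intended. You are also right that the honest difficulty is the uniqueness of the weak solution to the kinetic PDE \eqref{PSO eq}: the paper invokes the analogue of Lemma \ref{lemuni}, but the Appendix (Theorem \ref{thmapp}) is written only for the non-degenerate CBO operator and does not literally cover the degenerate velocity-diffusion case, so your remark that a hypoelliptic adaptation (or an independent well-posedness argument for the linear kinetic Fokker--Planck equation and the nonlinear McKean--PSO SDE) is needed is an accurate assessment of what the paper leaves implicit rather than a deviation from its strategy.
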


\section*{Appendix}
\begin{thm}\label{thmapp}
	For any $T>0$, let $b\in \mc{C}([0,T];\RR^d)$ and $\mu_0\in \mc{P}_2(\RR^d)$. Then the following linear PDE
	\begin{equation}\label{LPDE}
	\partial_t\mu_t=\frac{\sigma^2}{2}\sum_{k=1}^d\frac{\partial^2}{\partial{x_k}^2}((x-b_t)_k^2\mu_t)-\lambda\nabla
	\cdot((x-b_t)\mu_t)\,,
	\end{equation}
	has a unique weak solution $\mu\in \mc{C}([0,T];\mc{P}_2(\RR^d))$
\end{thm}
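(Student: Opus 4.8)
The plan is to obtain both existence and uniqueness through the linear stochastic differential equation (SDE) associated to \eqref{LPDE}. For existence, observe that since $b\in\mc{C}([0,T];\RR^d)$ is bounded on $[0,T]$, the coefficients of the linear SDE
\begin{equation}
\mathrm{d}\widehat X_t=-\lambda(\widehat X_t-b_t)\,\mathrm{d}t+\sigma D(\widehat X_t-b_t)\,\mathrm{d}B_t,\qquad \widehat X_0\sim\mu_0,
\end{equation}
are globally Lipschitz in $x$, uniformly in $t$, with at most linear growth. Standard SDE theory then yields a unique strong solution, and a Gronwall argument on $\EE[|\widehat X_t|^2]$ gives $\sup_{t\in[0,T]}\EE[|\widehat X_t|^2]<\infty$, so that $\mu_t:=\mc{L}(\widehat X_t)\in\mc{P}_2(\RR^d)$. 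Applying the It\^{o}--Doeblin formula to $\varphi(\widehat X_t)$ for $\varphi\in\mc{C}_c^2(\RR^d)$ and taking expectations produces exactly the weak formulation of Definition \ref{defweak}(ii), while the almost-sure path continuity of $\widehat X$ together with the uniform moment bound and dominated convergence yields continuity in time. Hence $\mu$ is a weak solution.

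The uniqueness is the crux. Let $\mu^1,\mu^2$ be two weak solutions with the same initial datum, and set $\nu_s:=\mu_s^1-\mu_s^2$, a signed measure with $\nu_0=0$ satisfying the weak equation with vanishing source. I would argue by duality against the backward Kolmogorov equation: for a fixed terminal time $\tau\in(0,T]$ and $\psi\in\mc{C}_c^\infty(\RR^d)$, set $u(s,x):=\EE[\psi(X_\tau^{s,x})]$, where $X^{s,x}$ solves the same SDE started from $x$ at time $s$. By Feynman--Kac, $u$ solves $\partial_s u+L_s u=0$ on $[0,\tau]$ with $u(\tau,\cdot)=\psi$, where $L_s=\tfrac{\sigma^2}{2}\sum_k(x-b_s)_k^2\partial_{x_k}^2-\lambda(x-b_s)\cdot\nabla$. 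Because the SDE is linear, the first- and second-order spatial derivative flows solve linear SDEs with solutions bounded in every $L^p$, so $u(s,\cdot)$ is bounded with bounded first- and second-order spatial derivatives. Inserting $u$ as a (now time-dependent) test function into the weak formulation for $\nu$ and using $\partial_s u+L_s u=0$ formally gives $\tfrac{\mathrm{d}}{\mathrm{d}s}\la u(s,\cdot),\nu_s\ra=0$, hence $\la\psi,\nu_\tau\ra=\la u(0,\cdot),\nu_0\ra=0$. As $\psi$ and $\tau$ are arbitrary, $\mu_t^1=\mu_t^2$ for all $t\in[0,T]$, and then $\sup_{t\in[0,T]}W_2(\mu_t^1,\mu_t^2)=0$ follows from $\mu^i\in\mc{C}([0,T];\mc{P}_2(\RR^d))$.

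The main obstacle is making the duality step rigorous: the dual solution $u$ is not compactly supported, so it is not directly an admissible test function, and the weak formulation in Definition \ref{defweak} is stated only for time-independent $\varphi\in\mc{C}_c^2$. I would therefore first extend the weak formulation to time-dependent test functions in $\mc{C}^{1,2}$ by a standard approximation in the time variable, and then approximate $u$ by $u\chi_R$ with smooth cutoffs $\chi_R$ supported in balls of radius $2R$. The error terms produced by $\chi_R$ involve $\nabla\chi_R$ and $\nabla^2\chi_R$, supported on $\{R\le|x|\le 2R\}$, paired against the quadratically growing coefficients $(x-b_s)_k^2$; these are controlled and sent to zero as $R\to\infty$ using the uniform bound $\sup_{s\in[0,T]}\int_{\RR^d}|x|^2\,\mu_s^i(\mathrm{d}x)<\infty$ together with the boundedness of the derivatives of $u$.

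As a cleaner alternative that sidesteps these approximations, uniqueness follows at once from the superposition principle of Trevisan (extending Ambrosio): the integrability $\int_0^T\!\int_{\RR^d}(|x-b_t|^2+|x-b_t|)\,\mu_t^i(\mathrm{d}x)\,\mathrm{d}t<\infty$ holds automatically for solutions in $\mc{C}([0,T];\mc{P}_2(\RR^d))$ since $b$ is bounded, so each $\mu^i$ is the flow of time-marginals of a solution to the martingale problem for $L_s$; the latter is well posed because the coefficients are globally Lipschitz, and uniqueness in law of that martingale problem immediately forces $\mu^1=\mu^2$.
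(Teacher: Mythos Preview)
Your primary approach is essentially the same as the paper's: existence via the law of the associated linear SDE, and uniqueness via duality with the backward Kolmogorov equation, representing the dual solution as $u(s,x)=\EE[\psi(X^{s,x}_\tau)]$ and obtaining the needed bounds on $\nabla u,\nabla^2 u$ from the (geometric Brownian motion) derivative flow. You are in fact more careful than the paper on one point: the paper's sketch simply inserts $h$ as a test function without addressing that it is not compactly supported, whereas you outline the cutoff/approximation argument needed to upgrade from $\varphi\in\mc{C}_c^2$ to bounded $\mc{C}^{1,2}$ test functions. Your alternative via Trevisan's superposition principle is a genuinely different and shorter route that the paper does not take; it trades the explicit dual construction for a black-box reduction to well-posedness of the (globally Lipschitz) martingale problem, at the cost of invoking a heavier external result.
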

\begin{proof}[Sketch of the proof]
	The existence is obvious, which can be obtained as the law of the solution to the associated linear SDE. To show  the uniqueness we can follow a duality argument. 
	
	For each $t_0 \in(0, T]$ and compactly supported smooth function $\psi$ (i.e., $\psi\in\mathcal C^{\infty}_c (\RR^d)$), we consider the following backward  PDE
	\begin{align}
	\partial_t h_t=-\frac{\sigma^2}{2}\sum_{k=1}^d (x-b_t)_k^2\frac{\partial^2}{\partial{x_k}^2} h_t-\lambda  (x-b_t)\cdot \nabla h_t,\quad(t,x)\in[0,t_0]\times \RR^d;\quad 
	h_{t_0}=\psi\,. \label{backward-PDE}
	\end{align}
	which admits a classical solution $h\in\mc{C}^1([0,t_0],\mc{C}^2(\RR^d))$.   Indeed, we can explicitly construct a solution 
	\begin{equation}
	h_t(x)=\EE[\psi(X_{t_0}^{t,x})]\quad t\in[0,t_0], \label{solution-backward}
	\end{equation}
	where $(X_s^{t,x})_{ 0\leq t\leq s\leq t_0}$ is the strong solution to the following linear SDE
	\begin{equation}
	d X_s^{t,x}=\lambda (X_s^{t,x}-b_s)ds+\sigma D(X_s^{t,x}-b_s)dB_s,\quad X_t^{t,x}=x, \label{SDE-appendix}
	\end{equation}
	with $D(y)=\diag(y_1,\cdots,y_d)$ for $y\in\RR^d$ and $B=(B^1,\dots,B^d)$ being a d-dimensional Wiener process. We can first check the regularity. For each $(t,x)\in[0,t_0]\times\RR^d$, the chain rule gives
	$$
	\nabla_kh_t(x)= \EE\left[\nabla_k\psi(X_{t_0}^{t,x}) \nabla_k  (X_{t_0}^{t,x})^k \right],\quad k=1,\cdots,d.
	$$
	Note that $\nabla_k (X_{t_0}^{t,x})^{k'}=0$ when $k'\neq k$ and  that $\nabla_k  (X^{t,x})^k$ is a Geometric Brownian motion satisfying SDE (c.f. \cite[Theorem 4.2]{metivier1982pathwise})
	\begin{equation*}
	d \nabla_k  (X_s^{t,x})^k=\lambda \nabla_k  (X_s^{t,x})^k\, ds+\sigma  \nabla_k  (X_s^{t,x})^k \,dB^k_s,\quad \nabla_k  (X_t^{t,x})^k=1.
	\end{equation*}
	This gives $\nabla_k  (X_s^{t,x})^k=\exp\{\lambda(s-t)-\frac{\sigma^2(s-t)}{2}-\sigma(B^k_s-B^k_t)\}$. Accordingly,  we may obtain the time-space continuity of $\nabla_kh_t(x)$ and in particular, there holds the following uniform boundness
	$$
	\sup_{(t,x)\in[0,t_0]\times\RR^d} \left| \nabla_kh_t(x) \right|
	\leq  C \EE\left[ \left| \nabla_k  (X_{t_0}^{t,x})^k \right| \right]
	\leq C e^{\lambda T}<\infty,
	\quad k=1,\cdots,d,
	$$
	where $C>0$ is a constant depending on $\psi$.
	Analogously, we may derive the uniform boundness of $\nabla^2 h$ and even of $\nabla^3h$ together with associated time-space continuity. On the other hand, for $0\leq t<t+\delta<t_0$, the flow property of solution to SDE \eqref{SDE-appendix} implies $X_s^{t,x}=X_s^{t+\delta,X_{t+\delta}^{t,x}}$ for $t+\delta<s\leq t_0$ and thus,
	\begin{align*}
	\frac{h_{t+\delta}(x)-h_t(x)}{\delta}
	&=\frac{1}{\delta} \EE\left[
	\psi(X_{t_0}^{t+\delta,x}) -\psi(X_{t_0}^{t,x})
	\right]
	\\
	&=\frac{1}{\delta} \EE\left[
	\psi(X_{t_0}^{t+\delta,x}) -\psi(X_{t_0}^{t+\delta,X_{t+\delta}^{t,x}})
	\right]
	\\
	&= \frac{1}{\delta} \EE\left[
	h_{t+\delta}( x)-h_{t+\delta}(X_{t+\delta}^{t,x})
	\right]
	\\
	&=-\frac{1}{\delta} \EE\left[
	\int_t^{t+\delta}
	\left( \frac{\sigma^2}{2}\sum_{k=1}^d (X_{s}^{t,x}-b_s)_k^2\frac{\partial^2}{\partial{x_k}^2}  h_{t+\delta}(X_{s}^{t,x})+\lambda  (X_{s}^{t,x}-b_s)\cdot \nabla h_{t+\delta}(X_{s}^{t,x}) \right)
	\,ds 
	\right].
	\end{align*}
	Through a simple limiting procedure we may get the time-differentiability of $h_t(x)$ and further verify that  the defined $h_t(x)$ is a classical solution of PDE \eqref{backward-PDE}\footnote{An application of It\^o-Doeblin formula also gives the uniqueness, whereas the existence is sufficient here.}.
	
	Suppose that $\mu^1$ and $\mu^2$ are two weak solutions of \eqref{LPDE} with the same initial condition $\mu^1_0=\mu^2_0$. Put $\delta\mu=\mu^1-\mu^2$. Using the above defined solution $h$ to the backward PDE \eqref{backward-PDE} as a test function, we have
	\begin{align}
	&   \la h_{t_0}(x),\delta\mu_{t_0}(dx) \ra  
	\notag\\
	=&\int_0^{t_0}\la \partial_sh_s(x),\delta\mu_s(dx)\ra ds+\int_0^{t_0}\la  \frac{\sigma^2}{2}\sum_{k=1}^d (x-b_s)_k^2\frac{\partial^2}{\partial{x_k}^2}  h_s(x),\delta\mu_s(dx)\ra ds
	+\int_0^{t_0}\la \lambda(x-b_s)\cdot\nabla h_s(x),\delta\mu_s(dx)\ra ds\notag\\
	=&\int_0^{t_0}\la \partial_sh_s(x),\delta\mu_s(dx)\ra ds+\int_0^{t_0}\la -\partial_sh_s(x),\delta\mu_s(dx)\ra ds
	\notag\\
	=&0,
	\end{align}
	which gives $\int_{\R^d} \psi(x)\delta\mu_{t_0}(dx)=0$ for arbitrary $\psi\in\mc{C}_c^\infty(\RR^d)$. This implies that $\delta\mu_{t_0}=0$, which yields the uniqueness by the arbitrariness of $t_0$.
\end{proof}

\bibliographystyle{amsxport}
\bibliography{MFLCBO}


\end{document}